\newtheorem{theorem}{Theorem}[section]
\theoremstyle{plain}
\newtheorem{lemma}[theorem]{Lemma}
\newtheorem{proposition}[theorem]{Proposition}
\numberwithin{equation}{section}
\newcommand{\aut}{\mathrm{Aut}\,}			
\newcommand{\inn}{\mathrm{Inn}\,}			
\newcommand{\ld}{\backslash}					
\newcommand{\inv}{^{-1}}						
\newcommand{\gl}{\mathrm{GL}}
\def\subsp#1{\langle #1\rangle}             
\begin{document}


\title{Commutative automorphic loops of order $p^3$}

\author[Barros]{Dylene Agda Souza de Barros}
\address[Barros, Grishkov]{Institute of Mathematics and Statistics, University of Sao Paulo, Rua do Mat\~{a}o, 1010, Cidade Universit\'aria, S\~{a}o Paulo, SP, Brazil, CEP 05508-090}
\email[Barros]{dylene@ime.usp.br}

\author[Grishkov]{Alexander Grishkov}
\email[Grishkov]{shuragri@gmail.com}

\author[Vojt\v{e}chovsk\'y]{Petr Vojt\v{e}chovsk\'y}
\address[Vojt\v{e}chovsk\'y]{Department of Mathematics, University of Denver, 2360 S Gaylord St, Denver, Colorado 80208, USA}
\email[Vojt\v{e}chovsk\'y]{petr@math.du.edu}

\begin{abstract}
A loop is said to be automorphic if its inner mappings are automorphisms. For a prime $p$, denote by $\mathcal A_p$ the class of all $2$-generated commutative automorphic loops $Q$ possessing a central subloop $Z\cong \mathbb Z_p$ such that $Q/Z\cong\mathbb Z_p\times\mathbb Z_p$. Upon describing the free $2$-generated nilpotent class two commutative automorphic loop and the free $2$-generated nilpotent class two commutative automorphic $p$-loop $F_p$ in the variety of loops whose elements have order dividing $p^2$ and whose associators have order dividing $p$, we show that every loop of $\mathcal A_p$ is a quotient of $F_p$ by a central subloop of order $p^3$. The automorphism group of $F_p$ induces an action of $\gl_2(p)$ on the three-dimensional subspaces of $Z(F_p)\cong (\mathbb Z_p)^4$. The orbits of this action are in one-to-one correspondence with the isomorphism classes of loops from $\mathcal A_p$. We describe the orbits, and hence we classify the loops of $\mathcal A_p$ up to isomorphism.

It is known that every commutative automorphic $p$-loop is nilpotent when $p$ is odd, and that there is a unique commutative automorphic loop of order $8$ with trivial center. Knowing $\mathcal A_p$ up to isomorphism, we easily obtain a classification of commutative automorphic loops of order $p^3$. There are precisely $7$ commutative automorphic loops of order $p^3$ for every prime $p$, including the $3$ abelian groups of order $p^3$.
\end{abstract}

\keywords{commutative automorphic loop, loops of order $p^3$, free commutative automorphic loop}

\subjclass[2010]{Primary: 20N05. Secondary: 20G40.}

\thanks{D. Barros's and A. Grishkov's stay at the University of Denver was partially supported by a grant from the Simons Foundation (grant 210176 to P. Vojt\v{e}chovsk\'y). P. Vojt\v{e}chovsk\'y thanks the Institute of Mathematics and Statistics at the University of S\~ao Paulo for hospitality and financial support. }

\maketitle

\section{Introduction}

A \emph{loop} is a set $Q$ with a binary operation $\cdot$ and a neutral element $1\in Q$ such that for every $a$, $b\in Q$ the equations $ax=b$ and $ya=b$ have unique solutions $x$, $y\in Q$, respectively. The element $x$ satisfying $ax=b$ will be denoted by $a\ld b$. See \cite{Bruck} for an introduction to the theory of loops.

Let $Q$ be a loop. For $x\in Q$, the \emph{left translation} $L_x:Q\to Q$ is defined by $L_x(y) = xy$, and the \emph{right translation} $R_x:Q\to Q$ by $R_x(y) = yx$. The \emph{inner mapping group} $\inn{Q}$ of $Q$ is the permutation group $\langle L_{x,y}$, $R_{x,y}$, $T_x;\;x$, $y\in Q\rangle$, where $L_{x,y} = L_{yx}\inv L_yL_x$, $R_{x,y} = R_{xy}\inv R_yR_x$, and $T_x = L_x\inv R_x$. Let $\aut{Q}$ be the \emph{automorphism group} of $Q$.

Denote by $Z(Q)$, $N(Q)$, $N_\lambda(Q)$ and $N_\mu(Q)$ the \emph{center}, \emph{nucleus}, \emph{left nucleus} and \emph{middle nucleus} of $Q$, respectively. For $x$, $y$, $z\in Q$, define the \emph{associator} $(x,y,z)$ of $x$, $y$, $z$ by $(xy)z = (x(yz))(x,y,z)$. The \emph{associator subloop} $A(Q)$ of $Q$ is the smallest normal subloop $H$ of $Q$ such that $Q/H$ is a group. Thus $A(Q)$ is the smallest normal subloop of $Q$ containing all associators $(x,y,z)$.

A loop $Q$ is \emph{nilpotent} if the series $Q$, $Q/Z(Q)$, $(Q/Z(Q))/Z(Q/Z(Q))$, $\dots$ terminates in $1$ in finitely many steps. In particular, $Q$ is of \emph{nilpotency class two} if $Q/Z(Q)\ne 1$ is an abelian group.

A loop $Q$ is said to be an \emph{automorphic loop} (or \emph{A-loop}) if $\inn{Q}\le\aut{Q}$. Note that a commutative loop is automorphic if and only if $L_{x,y}\in\aut{Q}$ for every $x$, $y\in Q$. This latter condition can be rewritten as
\begin{equation}\label{Eq:A}
    (yx)\ld (y(x(ab))) = [(yx)\ld(y(xa))][(yx)\ld(y(xb))],\tag{A}
\end{equation}
so a commutative loop is automorphic if and only if it satisfies the identity \eqref{Eq:A}. Groups are certainly automorphic loops, but there are many other examples.

The study of automorphic loops began with the paper \cite{BP} of Bruck and Paige. Among other results and constructions, they showed that automorphic loops are power-associative (that is, every element generates a group) and satisfy the antiautomorphic inverse property $(xy)^{-1}=y^{-1}x^{-1}$. The implicit goal of \cite{BP} was to show that diassociative (that is, every two elements generate a group) automorphic loops are Moufang. This was eventually proved by Osborn \cite{Osborn} in the commutative case, and by Kinyon, Kunen and Phillips \cite{KKP} in general.

Foundational results in the theory of commutative automorphic loops were obtained by Jedli\v{c}ka, Kinyon and Vojt\v{e}chovsk\'y in \cite{JKV1}, for instance the Odd Order Theorem, the Cauchy Theorem, and the Lagrange Theorem. In the companion paper \cite{JKV2}, the same authors noted that commutative automorphic loops of order $p$, $2p$, $4p$, $p^2$, $2p^2$ and $4p^2$ are abelian groups for every odd prime $p$, and they also constructed examples of nonassociative commutative automorphic loops of order $p^3$.

For a prime $p$, denote by $\mathcal A_p$ the class of all $2$-generated commutative automorphic loops $Q$ possessing a central subloop $Z\cong \mathbb Z_p$ such that $Q/Z\cong\mathbb Z_p\times\mathbb Z_p$. In this paper we classify the loops of $\mathcal A_p$ (cf. Theorem \ref{Th:Class2}), and also commutative automorphic loops of order $p^3$ (cf. Theorem \ref{Th:p3}) up to isomorphism. It turns out that all  loops of Theorem \ref{Th:p3} were constructed already in \cite{JKV2}, but the authors of \cite{JKV2} did not know whether their list was complete, and whether the constructed loops were pairwise nonisomorphic.

The classification of commutative automorphic loops of order $p^3$ is made possible by the fact that, when $p$ is odd, commutative automorphic $p$-loops are nilpotent, cf. \cite{JKV3}. There is a unique commutative automorphic loop of order $8$ that is not nilpotent, as can be seen quickly with a finite model builder (see \cite[Section 3]{JKV2} for details and for a near-complete human classification of commutative automorphic loops of order $8$).

In Section \ref{Sc:F} we construct the free nilpotent class two commutative automorphic loop $F$ on two generators. In Section \ref{Sc:F} we find a normal subloop $K_p$ of $F$ so that $F_p= F/K_p$ is the free nilpotent class two commutative automorphic $p$-loop on two generators in the variety of loops whose elements have order dividing $p^2$ and whose associators have order dividing $p$.

In Section \ref{Sc:Induced} we show that every loop of $\mathcal A_p$ is a quotient of $F_p$ by a central subloop of order $p^3$, and we show that $\aut F_p$ induces an action of $\gl_2(p)$ on $Z(F_p)\cong (\mathbb Z_p)^4$. Moreover, by Theorem \ref{Th:Translation}, the orbits of this action on the Grassmanian of the three-dimensional subspaces of $Z(F_p)$ correspond to the isomorphism classes of loops from $\mathcal A_p$. The orbits are described in detail in Section \ref{Sc:Orbits} (cf. Propositions \ref{Pr:n3} and \ref{Pr:3}), yielding the main results in Section \ref{Sc:Main}.

\section{The free $2$-generated commutative automorphic loop of nilpotency class two}\label{Sc:F}

Let $Z$ be an abelian group and $L$ a loop. Then a loop $Q$ is a \emph{central extension} of $Z$ by $L$ if $Z\le Z(Q)$ and $Q/Z$ is isomorphic to $L$. It is well known that $Q$ is a central extension of $Z$ by $L$ if and only if $Q$ is isomorphic to a loop $\mathcal Q(Z,L,\theta)$ defined on $L\times Z$ with multiplication
\begin{equation}\label{Eq:CentralExtension}
    (x_1,z_1)(x_2,z_2) = (x_1x_2,z_1z_2\theta(x_1,x_2)),
\end{equation}
where $\theta:L\times L\to Z$ is a \emph{(loop) cocycle}, that is, a mapping satisfying $\theta(x,1)=\theta(1,x)=1$ for every $x\in L$.

Straightforward calculation with \eqref{Eq:CentralExtension} shows that the associator in $\mathcal Q(Z,L,\theta)$ is obtained by the formula
\begin{equation}\label{Eq:Associator}
    (x_1z_1,x_2z_2,x_3z_3) = \theta(x_1,x_2)\theta(x_1x_2,x_3)\theta(x_2,x_3)^{-1}\theta(x_1,x_2x_3)^{-1}.
\end{equation}

\begin{lemma}\label{Lm:Class2}
Let $Q$ be a commutative loop of nilpotency class two. Then:
\begin{enumerate}
\item[(i)] $A(Q)\le Z(Q)$.
\item[(ii)] $(a,b,a)=1$, $(a,b,c)=(c,b,a)^{-1}$, and $(a,b,c)(b,c,a)(c,a,b)=1$ for every $a$, $b$, $c\in Q$.
\item[(iii)] $Q$ is an automorphic loop if and only if $(ab,c,d) = (a,c,d)(b,c,d)$ for every $a$, $b$, $c$, $d\in Q$.
\end{enumerate}
\end{lemma}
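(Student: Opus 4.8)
The plan is to prove the three parts in order, using part (i) to guarantee that all associators are central; this is the fact that makes the computations in (ii) and (iii) tractable.

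For (i), I would argue directly from the definitions. Since $Q$ has nilpotency class two, $Q/Z(Q)$ is an abelian group and hence associative, so the image of every associator $(x,y,z)$ in $Q/Z(Q)$ is trivial; that is, $(x,y,z)\in Z(Q)$. As $Z(Q)$ is a normal subloop containing all associators and $A(Q)$ is by definition the smallest such subloop, we conclude $A(Q)\le Z(Q)$. In particular every associator lies in the center and may be moved past any product and inverted at will, which I use freely below.

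For (ii), I would expand the defining relation $(xy)z = (x(yz))(x,y,z)$ for suitable triples and exploit commutativity. For $(a,b,a)=1$: commutativity gives $(ab)a = a(ab) = a(ba)$, while the definition gives $(ab)a = (a(ba))(a,b,a)$, and comparing the two yields $(a,b,a)=1$. For the cyclic identity I would write the three defining relations $(ab)c=(a(bc))(a,b,c)$, $(bc)a=(b(ca))(b,c,a)$, $(ca)b=(c(ab))(c,a,b)$, and then use the commutativity equalities $(ab)c=c(ab)$, $(bc)a=a(bc)$, $(ca)b=b(ca)$ to chain the three right-hand sides: with $P=a(bc)$, $Q'=b(ca)$, $R=c(ab)$ these read $R=P(a,b,c)$, $P=Q'(b,c,a)$, $Q'=R(c,a,b)$, and back-substitution gives $R=R\,(a,b,c)(b,c,a)(c,a,b)$. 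Since the associators are central this forces $(a,b,c)(b,c,a)(c,a,b)=1$. The relation $(a,b,c)=(c,b,a)\inv$ follows by the same device, comparing the expansion of $(cb)a$ with that of $(ab)c$ via commutativity.

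The heart of the lemma, and the step I expect to be the main obstacle, is (iii). I would first establish the key formula $L_{x,y}(w) = w\,(y,x,w)\inv$: starting from $L_{x,y}=L_{yx}\inv L_yL_x$, compute $L_yL_x(w)=y(xw)$, rewrite $y(xw)=((yx)w)(y,x,w)\inv$ using the associator (legal because it is central by (i)), and then apply $L_{yx}\inv$, pulling the central factor through the left division. Since $Q$ is commutative, $L_{x,y}$ is an automorphism precisely when $L_{x,y}(ab)=L_{x,y}(a)L_{x,y}(b)$ for all $a$, $b$; substituting the formula and collecting the central associators reduces this to additivity in the last argument, namely $(y,x,ab)=(y,x,a)(y,x,b)$. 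Finally I would invoke the antisymmetry $(y,x,w)=(w,x,y)\inv$ from (ii) to rewrite this as additivity in the first argument, $(ab,c,d)=(a,c,d)(b,c,d)$, which is exactly the stated condition. The care needed is entirely in the bookkeeping of which slot is additive and in the repeated, careful use of centrality when passing associators through products and through $L_{yx}\inv$; everything else is routine.
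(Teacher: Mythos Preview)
Your proposal is correct and follows essentially the same route as the paper: the same one-line argument for (i), the same commutativity-plus-centrality chains for the three identities in (ii) (you do the cyclic identity before the antisymmetry, the paper does them in the opposite order, but neither relies on the other), and the same reduction of \eqref{Eq:A} via the formula $L_{x,y}(w)=w\,(y,x,w)^{-1}$ (the paper writes this as $(ab)\ld(a(bc))=c\,(a,b,c)^{-1}$) followed by the antisymmetry from (ii).
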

\begin{proof}
(i) The inclusion $A(Q)\le Z(Q)$ holds since $Q/Z(Q)$ is a group.

(ii) The identity $(a,b,a)=1$ is equivalent to $(ab)a = a(ba)$, which obviously holds in any commutative loop. Using the fact that all associators are central, we can write $(cb)a(c,b,a)^{-1} = c(ba) = (ab)c = a(bc)(a,b,c) = (cb)a(a,b,c)$, and $(a,b,c) = (c,b,a)^{-1}$ follows. Finally, $(ab)c = a(bc)(a,b,c) = (bc)a(a,b,c) = b(ca)(a,b,c)(b,c,a) = (ca)b(a,b,c)(b,c,a) = c(ab)(a,b,c)(b,c,a)(c,a,b) = (ab)c(a,b,c)(b,c,a)(c,a,b)$, hence $(a,b,c)(b,c,a)(c,a,b)=1$.

(iii) Note that $(ab)\ld(a(bc)) = c(a,b,c)^{-1}$. The identity \eqref{Eq:A} is therefore equivalent to $ab(d,c,ab)^{-1} = a(d,c,a)^{-1}b(d,c,b)^{-1}$, which is equivalent to $(ab,c,d) = (a,c,d)(b,c,d)$, by (ii).
\end{proof}

\begin{lemma}\label{Lm:AClass2}
Let $Q$ be a commutative automorphic loop of nilpotency class two. Then:
\begin{enumerate}
\item[(i)] For every $a$, $b$, $c$, $d\in Q$,
\begin{align*}
    (ab,c,d) &= (a,c,d)(b,c,d),\\
    (a,b,cd) &= (a,b,c)(a,b,d),\\
    (a,bc,d) &= (a,d,b)(a,d,c)(b,a,d)(c,a,d).
\end{align*}
\item[(ii)] For every $a$, $b$, $c$, $d\in Q$,
\begin{displaymath}
    (ab)(cd) = (ac)(bd)(ac,b,d)(b,a,c)(d,c,ab).
\end{displaymath}
\item[(iii)] For every $a$, $b\in Q$ and $i$, $j$, $k\in\mathbb Z$,
\begin{displaymath}
    (a^i,b^j,b^k) = (a,b,b)^{ijk},\quad (b^i,a^j,b^k)=1,\quad (b^i,b^j,a^k) = (b,b,a)^{ijk}.
\end{displaymath}
\item[(iv)] For every $a$, $b\in Q$ and $i_1$, $i_2$, $j_1$, $j_2$, $k_1$, $k_2\in\mathbb Z$,
\begin{displaymath}
    (a^{i_1}b^{i_2},a^{j_1}b^{j_2},a^{k_1}b^{k_2}) = (a,a,b)^{j_1(i_1k_2-i_2k_1)}(a,b,b)^{j_2(i_1k_2-i_2k_1)}.
\end{displaymath}
\end{enumerate}
\end{lemma}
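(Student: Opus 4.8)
The plan is to derive every part from Lemma \ref{Lm:Class2} together with the fact that all associators are central. For (i), the first identity is merely a restatement of Lemma \ref{Lm:Class2}(iii). The second, linearity in the last slot, I would obtain by applying skew-symmetry $(x,y,z)=(z,y,x)^{-1}$: write $(a,b,cd)=(cd,b,a)^{-1}$, expand $(cd,b,a)=(c,b,a)(d,b,a)$ by the first-slot linearity just established, and translate each factor back, reordering the two central factors freely. For the third (middle-slot) identity I would start from the cyclic relation $(a,bc,d)(bc,d,a)(d,a,bc)=1$ of Lemma \ref{Lm:Class2}(ii), solve for $(a,bc,d)$, expand $(bc,d,a)$ and $(d,a,bc)$ by first- and last-slot linearity, and convert each resulting factor to the stated form by skew-symmetry. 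Every rearrangement is legitimate because $A(Q)\le Z(Q)$.

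For (ii), I would reassociate $(ab)(cd)$ into $(ac)(bd)$ in three steps, each governed by the associator definition $(xy)z=(x(yz))(x,y,z)$ together with commutativity and centrality. Pulling the last factor out gives $(ab)(cd)=((ab)c)d\,(ab,c,d)^{-1}$; commuting $c$ past $ab$ and reassociating yields $(ab)c=((ac)b)(c,a,b)^{-1}$; and a final reassociation gives $((ac)b)d=((ac)(bd))(ac,b,d)$. Collecting the central corrections produces $(ab)(cd)=(ac)(bd)\,(ac,b,d)(c,a,b)^{-1}(ab,c,d)^{-1}$, and skew-symmetry rewrites $(c,a,b)^{-1}=(b,a,c)$ and $(ab,c,d)^{-1}=(d,c,ab)$ to match the claimed formula.

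For (iii), I would induct on the integer exponents using the linearity identities of (i), extending to negative exponents via $(a^{-1},c,d)=(a,c,d)^{-1}$, which follows from first-slot linearity applied to $a^{-1}a=1$. Expanding the outer slots reduces $(a^i,b^j,b^k)$ to $(a,b^j,b)^{ik}$; the middle term I would evaluate from the cyclic identity together with $(b,a,b)=1$ and $(b^j,b,a)=(b,b,a)^j$, obtaining $(a,b^j,b)=(a,b,b)^j$ and hence $(a^i,b^j,b^k)=(a,b,b)^{ijk}$. The identity $(b^i,a^j,b^k)=1$ is immediate because the outer expansion collapses onto $(b,a^j,b)=1$, and $(b^i,b^j,a^k)=(b,b,a)^{ijk}$ then follows by skew-symmetry from the first identity.

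Finally, (iv) is the full multilinear expansion. I would expand the first and last slots of $(a^{i_1}b^{i_2},a^{j_1}b^{j_2},a^{k_1}b^{k_2})$ by the clean first- and last-slot linearity, then expand the middle slot of each resulting term by the twisted middle-slot rule of (i), arriving at a product of associators whose three entries are each a power of a single generator. Each such associator is evaluated by (iii), noting $(a^i,a^j,a^k)=1$ and that the mixed cases reduce to powers of the two fundamental associators $(a,a,b)$ and $(a,b,b)$, after which the surviving factors are collected. I expect the main obstacle to be exactly this bookkeeping: the expansion generates on the order of a dozen central factors, most of which cancel in pairs, and care with the signs coming from skew-symmetry is essential so that the survivors assemble into $(a,a,b)^{j_1(i_1k_2-i_2k_1)}(a,b,b)^{j_2(i_1k_2-i_2k_1)}$.
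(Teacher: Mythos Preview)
Your plan is correct and mirrors the paper's proof essentially step for step: part (i) via Lemma \ref{Lm:Class2}(iii), skew-symmetry, and the cyclic identity; part (ii) by the same three reassociations with the same skew-symmetry rewrites; part (iii) by outer-slot linearity plus the cyclic identity together with $(b,a,b)=1$; and part (iv) by expanding outer slots first, then the middle slot, then invoking (iii). The only cosmetic difference is that in (iv) the paper kills the terms $(a^{i_1},a^{j_1}b^{j_2},a^{k_1})$ and $(b^{i_2},a^{j_1}b^{j_2},b^{k_2})$ immediately via $(a^i,X,a^k)=(a,X,a)^{ik}=1$ before expanding the middle slot, so only eight intermediate factors appear rather than the ``dozen'' you anticipate---but the bookkeeping is otherwise identical.
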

\begin{proof}
(i) The first equality is from Lemma \ref{Lm:Class2}(iii). Then $(a,b,cd) = (a,b,c)(a,b,d)$ follows by $(a,b,c)=(c,b,a)^{-1}$ of Lemma \ref{Lm:Class2}(ii). Finally, by Lemma \ref{Lm:Class2}(ii), we have $(a,bc,d) = (bc,d,a)^{-1}(d,a,bc)^{-1} = (a,d,b)(a,d,c)(b,a,d)(c,a,d)$.

(ii) We have
\begin{align*}
(ab)(cd) &= ((ab)c)d(ab,c,d)^{-1} = ((ab)c)d(d,c,ab) = (c(ab))d(d,c,ab)\\
 & = ((ca)b)d(d,c,ab)(c,a,b)^{-1} =((ca)b)d(d,c,ab)(b,a,c)\\
 & = ((ac)b)d(d,c,ab)(b,a,c) = (ac)(bd)(d,c,ab)(b,a,c)(ac,b,d).
\end{align*}

(iii) We have $(b^i,a^j,b^k) = (b,a^j,b)^{ik} = 1$ by (i) and Lemma \ref{Lm:Class2}(ii). Using this fact and Lemma \ref{Lm:Class2}(ii) again, we get $(a^i,b^j,b^k) = (a,b^j,b)^{ik} = (b^j,b,a)^{-ik}(b,a,b^j)^{-ik} = (b^j,b,a)^{-ik} = (b,b,a)^{-ijk} = (a,b,b)^{ijk}$ and $(b^i,b^j,a^k) = (a^k,b^j,b^i)^{-1} = (a,b,b)^{-ijk} = (b,b,a)^{ijk}$.

(iv) Using parts (i), (ii) and (iii), we have
\begin{align*}
    (a^{i_1}b^{i_2},a^{j_1}b^{j_2},a^{k_1}b^{k_2})
    &=(a^{i_1},a^{j_1}b^{j_2},a^{k_1})(a^{i_1},a^{j_1}b^{j_2},b^{k_2})(b^{i_2},a^{j_1}b^{j_2},a^{k_1})(b^{i_2},a^{j_1}b^{j_2},b^{k_2})\\
    &=(a^{i_1},a^{j_1}b^{j_2},b^{k_2})(b^{i_2},a^{j_1}b^{j_2},a^{k_1})\\
    &=(a^{i_1},b^{k_2},a^{j_1})(a^{i_1},b^{k_2},b^{j_2})(a^{j_1},a^{i_1},b^{k_2})(b^{j_2},a^{i_1},b^{k_2})\\
    &\quad\cdot (b^{i_2},a^{k_1},a^{j_1})(b^{i_2},a^{k_1},b^{j_2})(a^{j_1},b^{i_2},a^{k_1})(b^{j_2},b^{i_2},a^{k_1})\\
    &=(a^{i_1},b^{k_2},b^{j_2})(a^{j_1},a^{i_1},b^{k_2})(b^{i_2},a^{k_1},a^{j_1})(b^{j_2},b^{i_2},a^{k_1})\\
    &=(a,b,b)^{i_1j_2k_2}(a,a,b)^{i_1j_1k_2}(b,a,a)^{i_2j_1k_1}(b,b,a)^{i_2j_2k_1}\\
    &=(a,a,b)^{j_1(i_1k_2-i_2k_1)}(a,b,b)^{j_2(i_1k_2-i_2k_1)}.
\end{align*}
\end{proof}

\begin{theorem}\label{Th:Free2}
Let $F$ be the free commutative automorphic loop of nilpotency class two with free generators $x_1$, $x_2$, and let $z_1=(x_1,x_1,x_2)$, $z_2 = (x_1,x_2,x_2)$. Then every element of $F$ can be written uniquely as $x_1^{a_1}x_2^{a_2}z_1^{a_3}z_2^{a_4}$ for some $a_1$, $a_2$, $a_3$, $a_4\in\mathbb Z$, and the multiplication in $F$ is given by
\begin{equation}\label{Eq:FreeProd}
    (x_1^{a_1}x_2^{a_2}z_1^{a_3}z_2^{a_4})(x_1^{b_1}x_2^{b_2}z_1^{b_3}z_2^{b_4})
    =x_1^{a_1+b_1}x_2^{a_2+b_2}z_1^{a_3+b_3-a_1b_1(a_2+b_2)}z_2^{a_4+b_4+a_2b_2(a_1+b_1)}.
\end{equation}
Furthermore, $Z(F) = N_\lambda(F) = N_\mu(F) = N(F) = A(F) = \langle z_1,z_2\rangle\cong \mathbb Z^2$. The loop $F$ is the central extension of the free abelian group $\langle z_1,z_2\rangle$ by the free abelian group with free generators $x_1$, $x_2$ via the cocycle
\begin{equation}\label{Eq:FreeCocycle}
    \theta(x_1^{a_1}x_2^{a_2},x_1^{b_1}x_2^{b_2}) = z_1^{-a_1b_1(a_2+b_2)}z_2^{a_2b_2(a_1+b_1)}.
\end{equation}
Finally, the associator in $F$ is given by
\begin{equation}\label{Eq:FreeAssociator}
    (x_1^{a_1}x_2^{a_2}z_1^{a_3}z_2^{a_4},x_1^{b_1}x_2^{b_2}z_1^{b_3}z_2^{b_4},x_1^{c_1}x_2^{c_2}z_1^{c_3}z_2^{c_4}) = z_1^{b_1(a_1c_2-a_2c_1)}z_2^{b_2(a_1c_2-a_2c_1)}.
\end{equation}
\end{theorem}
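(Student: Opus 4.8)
The plan is to exhibit an explicit model $Q$ on the set $\mathbb Z^4$, prove directly that $Q$ is a commutative automorphic loop of nilpotency class $\le 2$ generated by two elements, and then identify $F$ with $Q$ via the universal property of the free loop. I may assume that $F$ exists, since commutativity, the identity \eqref{Eq:A}, and the centrality of associators are all equational, so commutative automorphic loops of class $\le 2$ form a variety; the free object on $\{x_1,x_2\}$ will have class exactly two because $z_1,z_2$ will turn out to be nontrivial.

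First I would pin down the normal form in $F$ using only the two lemmas. Since $A(F)\le Z(F)$ by Lemma \ref{Lm:Class2}(i), all associators are central, and by the multiplicativity of the associator in each argument (Lemma \ref{Lm:AClass2}(i)) every associator of elements of $F$ expands into a product of associators of the generators $x_1,x_2$; each such generator-associator is, by Lemma \ref{Lm:AClass2}(iii), a power of $z_1=(x_1,x_1,x_2)$ or $z_2=(x_1,x_2,x_2)$. Hence $A(F)=\langle z_1,z_2\rangle$ is an abelian group generated by two central elements, and since $F/A(F)$ is abelian and generated by the images of $x_1,x_2$, every element of $F$ can be written as $x_1^{a_1}x_2^{a_2}z_1^{a_3}z_2^{a_4}$. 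The product formula \eqref{Eq:FreeProd} then reduces, after pulling the central powers out, to evaluating $(x_1^{a_1}x_2^{a_2})(x_1^{b_1}x_2^{b_2})$ through the expansion of $(ab)(cd)$ in Lemma \ref{Lm:AClass2}(ii) and the associator values in Lemma \ref{Lm:AClass2}(iii), which also reads off the cocycle \eqref{Eq:FreeCocycle}; and the associator formula \eqref{Eq:FreeAssociator} is the specialization of Lemma \ref{Lm:AClass2}(iv) to $a=x_1$, $b=x_2$, the central factors being irrelevant.

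The substantive point — and the step I expect to be the main obstacle — is uniqueness of the normal form, equivalently the absence of hidden relations, which I would settle by building a faithful model. Define $Q$ on $\mathbb Z^4$ by \eqref{Eq:FreeProd}; equivalently $Q=\mathcal Q(\langle z_1,z_2\rangle,\mathbb Z^2,\theta)$ is the central extension \eqref{Eq:CentralExtension} of the free abelian group $\langle z_1,z_2\rangle\cong\mathbb Z^2$ by $\mathbb Z^2$ given by the cocycle \eqref{Eq:FreeCocycle}. One checks at once that $Q$ is a loop (division is uniquely solvable coordinate by coordinate) and is commutative, since $\theta$ is symmetric. The work lies in verifying that $Q$ is automorphic of class $\le 2$: I would feed $\theta$ into the central-extension associator formula \eqref{Eq:Associator} and compute that the associator of $x_1^{a_1}x_2^{a_2}$, $x_1^{b_1}x_2^{b_2}$, $x_1^{c_1}x_2^{c_2}$ equals $z_1^{b_1(a_1c_2-a_2c_1)}z_2^{b_2(a_1c_2-a_2c_1)}$. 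This is a bounded polynomial identity in six exponents, but it is the one genuinely laborious calculation. Its output shows every associator lands in the central subgroup $\langle z_1,z_2\rangle$, so $Q$ has class $\le 2$, and it is visibly additive in the first argument, so $(ab,c,d)=(a,c,d)(b,c,d)$ holds and $Q$ is automorphic by Lemma \ref{Lm:Class2}(iii).

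Finally I would glue the two halves together. The same computation gives $(e_1,e_1,e_2)=z_1$ and $(e_1,e_2,e_2)=z_2$ for the generators $e_1,e_2$ of $Q$, so $z_1,z_2\in\langle e_1,e_2\rangle$ and $Q$ is $2$-generated. The universal property of $F$ yields a surjection $\phi\colon F\to Q$ with $\phi(x_i)=e_i$, whence $\phi(z_j)$ is the corresponding associator of $Q$ and $\phi$ sends $x_1^{a_1}x_2^{a_2}z_1^{a_3}z_2^{a_4}$ to the tuple $(a_1,a_2,a_3,a_4)$. Since distinct tuples are distinct in $Q$, any kernel element has all exponents zero, so $\phi$ is injective, hence an isomorphism $F\cong Q$; this simultaneously proves uniqueness of the normal form and transports \eqref{Eq:FreeProd}, \eqref{Eq:FreeCocycle}, \eqref{Eq:FreeAssociator} and the central-extension description to $F$, with $\langle z_1,z_2\rangle\cong\mathbb Z^2$ free abelian by construction. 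For the remaining identities $Z(F)=N_\lambda(F)=N_\mu(F)=N(F)=A(F)$ I would argue inside $Q$: by \eqref{Eq:FreeAssociator} an element $g$ with $x$-exponents $a_1,a_2$ satisfies $(g,x_1,x_2)=z_1^{a_1}$, $(g,x_2,x_1)=z_2^{-a_2}$ and $(x_1,g,x_2)=z_1^{a_1}z_2^{a_2}$, so membership in any one of the left nucleus, middle nucleus, nucleus, or center already forces $a_1=a_2=0$; combined with $A(F)=\langle z_1,z_2\rangle\le Z(F)$, all five subloops coincide with $\langle z_1,z_2\rangle$.
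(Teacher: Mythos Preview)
Your proposal is correct and follows essentially the same route as the paper: build the explicit model on $\mathbb Z^4$ via the cocycle, verify the associator formula and hence the automorphic property by Lemma~\ref{Lm:Class2}(iii), and then use freeness of $F$ together with the normal form from Lemmas~\ref{Lm:Class2}--\ref{Lm:AClass2} to identify $F$ with the model. The only noticeable difference is in the coincidence of the nuclei: the paper computes the single associator $(x_1,x_1^{a_1}x_2^{a_2},x_2)=z_1^{a_1}z_2^{a_2}$ to get $N_\mu(F)\le Z(F)$ and then invokes the Bruck--Paige inclusions $Z(Q)\le N(Q)=N_\lambda(Q)\cap N_\rho(Q)\le N_\mu(Q)$ valid in any automorphic loop, whereas you argue directly from \eqref{Eq:FreeAssociator} for each nucleus separately; both work.
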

\begin{proof}
Consider the mapping $\widehat{\theta}:\mathbb Z^2\times\mathbb Z^2\to\mathbb Z^2$ defined by
\begin{displaymath}
    \widehat{\theta}((a_1,a_2),(b_1,b_2)) = (-a_1b_1(a_2+b_2),a_2b_2(a_1+b_1)),
\end{displaymath}
and let $\widehat{F} = \mathcal Q(\mathbb Z^2,\mathbb Z^2,\widehat{\theta})$. By \eqref{Eq:CentralExtension}, $\widehat{F}$ is defined on $\mathbb Z^4$ by
\begin{displaymath}
    (a_1,a_2,a_3,a_4)(b_1,b_2,b_3,b_4) = (a_1+b_1,a_2+b_2,a_3+b_3-a_1b_1(a_2+b_2),a_4+b_4+a_2b_2(a_1+b_1)).
\end{displaymath}
It follows that $Z(\widehat F)\ge \{(0,0,a_3,a_4);\;a_i\in\mathbb Z\}$ and $\widehat{F}$ is a commutative loop of nilpotency class at most two.

With $a=(a_1,a_2,a_3,a_4)$, $b=(b_1,b_2,b_3,b_4)$, $c=(c_1,c_2,c_3,c_4)\in\widehat{F}$, a straightforward evaluation of \eqref{Eq:Associator} yields
\begin{displaymath}
    (a,b,c) = (0,0,b_1(a_1c_2-a_2c_1),b_2(a_1c_2-a_2c_1)).
\end{displaymath}
Hence $Z(\widehat F) = \{(0,0,a_3,a_4);\;a_i\in \mathbb Z\}$. Another routine calculation gives the identity $(ab,c,d)=(a,c,d)(b,c,d)$ in $\widehat{F}$, so, by Lemma \ref{Lm:Class2}(iii), $\widehat{F}$ is an automorphic loop. With $e_1=(1,0,0,0)$, $e_2 = (0,1,0,0)$, $e_3=(0,0,1,0)$, $e_4=(0,0,0,1)$, note that $(e_1,e_1,e_2) = e_3$, $(e_1,e_2,e_2) = e_4$, and $e_1^{a_1}e_2^{a_2}e_3^{a_3}e_4^{a_4} = (a_1,a_2,a_3,a_4)$.

Let now $F$ be as in the statement of the theorem. By Lemmas \ref{Lm:Class2} and \ref{Lm:AClass2}, any word in $x_1$, $x_2$ can be written as $x_1^{a_1}x_2^{a_2}z_1^{a_3}z_2^{a_4}$, for some $a_i\in\mathbb Z$. Consider the homomorphism $\widehat{\phantom x}:F\to\widehat F$ determined by $\widehat{x_1}=e_1$, $\widehat{x_2}=e_2$. Since $\widehat{z_1} = e_3$ and $\widehat{z_2}=e_4$, we see that $\widehat{\phantom x}$ maps $x_1^{a_1}x_2^{a_2}z_1^{a_3}z_2^{a_4}$ onto $e_1^{a_1}e_2^{a_2}e_3^{a_3}e_4^{a_4} = (a_1,a_2,a_3,a_4)$. This means that $\widehat{\phantom x}:F\to\widehat{F}$ is in fact an isomorphism, and that every element of $F$ can be written uniquely as $x_1^{a_1}x_2^{a_2}z_1^{a_3}z_2^{a_4}$.

We have now established all claims of the theorem except for $Z(F) = N_\lambda(F) = N_\mu(F) = N(F)$. By Lemma \ref{Lm:AClass2}, $(x_1,x_1^{a_1}x_2^{a_2},x_2) = z_1^{a_1}z_2^{a_2}$, so $N_\mu(F)\le Z(F)$. Since $Z(Q)\le N(Q)=N_\lambda(Q)\cap N_\rho(Q)\le N_\mu(Q)$ holds in any automorphic loop $Q$ by \cite[Corollary to Lemma 2.8]{BP}, we are done.
\end{proof}

\section{The loops $K_p$ and $F_p =  F/K_p$}\label{Sc:Fp}

Let $K_p$ be the smallest normal subloop of $F$ containing
$\{a^{p^2};\;a\in F\}\cup \{a^p;\;a\in A(F)\}$.

\begin{lemma}
$K_p=\{(p^2c_1,p^2c_2,pc_3,pc_4);\;c_i\in \mathbb Z\}$.
\end{lemma}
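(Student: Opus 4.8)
The goal is to identify the smallest normal subloop $K_p$ of $F$ containing all $p^2$-th powers of elements of $F$ together with all $p$-th powers of associators. Using the explicit coordinate description from Theorem \ref{Th:Free2}, where each element is written uniquely as $x_1^{a_1}x_2^{a_2}z_1^{a_3}z_2^{a_4}=(a_1,a_2,a_3,a_4)$, I would first compute the two generating families. For the power maps, since $F$ has nilpotency class two, power-associativity (each element generates a cyclic group) gives $(x_1^{a_1}x_2^{a_2}z_1^{a_3}z_2^{a_4})^{p^2}$; I would verify via \eqref{Eq:FreeProd} that raising to the $p^2$ power sends $(a_1,a_2,a_3,a_4)$ into the set of elements whose first two coordinates are divisible by $p^2$ and whose last two coordinates are divisible by $p$ (the cocycle contributions collect with a binomial-type coefficient that is divisible by $p$ once we take a $p^2$-th power). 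For the associator generators, by \eqref{Eq:FreeAssociator} every associator lies in $\langle z_1,z_2\rangle$, so its $p$-th power lies in $\{(0,0,pc_3,pc_4)\}$, and I would exhibit specific associators, e.g.\ $(x_1,x_1,x_2)^p=z_1^p=(0,0,p,0)$ and $z_2^p=(0,0,0,p)$, to show both $z_i^p$ are generators.

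Let $S=\{(p^2c_1,p^2c_2,pc_3,pc_4);\;c_i\in\mathbb Z\}$ denote the claimed answer. The plan is to prove $K_p=S$ by the two standard inclusions. For $S\subseteq K_p$: I would show $S$ is generated as a normal subloop by the listed powers. The elements $z_1^p$ and $z_2^p$ already give the central directions $(0,0,p,0)$ and $(0,0,0,p)$, producing all of $\{(0,0,pc_3,pc_4)\}$. The harder directions are $(p^2,0,0,0)$ and $(0,p^2,0,0)$; these should arise as $x_1^{p^2}$ and $x_2^{p^2}$ up to central correction, and any central discrepancy is absorbed because it already lies in the $\langle z_1^p,z_2^p\rangle$ part. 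For the reverse inclusion $K_p\subseteq S$: I would verify that $S$ is itself a normal subloop of $F$ containing all the generators. Containment of generators is exactly the computation from the first paragraph. Normality and closure are checked directly on coordinates: $S$ is the kernel of the coordinatewise reduction map $F\to(\mathbb Z_{p^2})^2\times(\mathbb Z_p)^2$ once one checks this map respects the loop multiplication \eqref{Eq:FreeProd}, and kernels of homomorphisms are normal subloops.

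Concretely, I would define $\pi:F\to (\mathbb Z_{p^2})\times(\mathbb Z_{p^2})\times(\mathbb Z_p)\times(\mathbb Z_p)$ by reducing the four coordinates modulo $p^2,p^2,p,p$ respectively, and argue it is a loop homomorphism by checking that the multiplication rule \eqref{Eq:FreeProd} survives these reductions: the first two coordinates are additive and reduce cleanly mod $p^2$, while for the third and fourth coordinates the correction terms $-a_1b_1(a_2+b_2)$ and $a_2b_2(a_1+b_1)$ reduce mod $p$ consistently because $a_i,b_i$ enter only through their residues mod $p^2$, hence a fortiori mod $p$. Then $S=\ker\pi$ is automatically a normal subloop, and since I will have shown it contains every generator of $K_p$, minimality of $K_p$ gives $K_p\subseteq S$; combined with $S\subseteq K_p$ this yields equality.

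The main obstacle I anticipate is the well-definedness of $\pi$ on the third and fourth coordinates, i.e.\ confirming that the quadratic cocycle terms are compatible with reducing the input coordinates modulo $p^2$ but the output coordinate only modulo $p$. One must check that changing $a_1$ by a multiple of $p^2$ (say) alters $a_1b_1(a_2+b_2)$ by a multiple of $p^2$, hence certainly by a multiple of $p$, so the mod-$p$ value of the third coordinate is unaffected; the analogous check for each input variable, and for both correction terms, is the crux. Once this compatibility is established the rest is bookkeeping, so I would present the homomorphism verification carefully and treat the two inclusions as short consequences.
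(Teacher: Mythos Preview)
Your proposal is correct and follows essentially the same two-inclusion structure as the paper: both arguments hinge on computing $a^{p^2}$ explicitly and verifying that the cubic cocycle contribution in the last two coordinates picks up a coefficient $\sum_{i=1}^{p^2-1}(i+i^2)$ divisible by $p$. The one genuine methodological difference is in how you establish that $S$ is a normal subloop. The paper verifies this directly: it checks closure under multiplication and division by hand, and then uses \eqref{Eq:FreeAssociator} to see that $(a,b,c)\in S$ whenever $c\in S$, which suffices for normality in this nilpotency-class-two setting. You instead realize $S$ as the kernel of the reduction homomorphism $\pi:F\to(\mathbb Z_{p^2})^2\times(\mathbb Z_p)^2$, after checking that the multiplication \eqref{Eq:FreeProd} descends. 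Your route is slightly slicker and in fact anticipates the paper's very next lemma (Lemma~\ref{Lm:Fp}), which identifies $F_p=F/K_p$ with precisely this quotient loop; the paper's direct verification, on the other hand, avoids having to say anything about the target loop structure at this stage. One small remark: your caution about ``central discrepancies'' in $x_1^{p^2}$ and $x_2^{p^2}$ is unnecessary, since for $c_2=0$ (resp.\ $c_1=0$) the cocycle corrections vanish identically and $x_i^{p^2}=(p^2\delta_{i1},p^2\delta_{i2},0,0)$ on the nose.
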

\begin{proof}
Let $c\in F$ and $z\in Z(F)$. Since $(cz)^k = c^kz^k$, $K_p$ is generated by elements of the form $(c_1,c_2,0,0)^{p^2}$ and
$(0,0,c_3,c_4)^p$. By \eqref{Eq:FreeProd}, $(0,0,c_3,c_4)^p = (0,0,pc_3,pc_4)$. An easy
induction on $k\ge 1$ shows that
\begin{displaymath}
    (c_1,c_2,0,0)^k =
    (kc_1,kc_2,-c_1^2c_2\sum_{i=1}^{k-1}(i+i^2),c_2^2c_1\sum_{i=1}^{k-1}(i+i^2)).
\end{displaymath}
Since
\begin{displaymath}
    \sum_{i=1}^{p^2-1}(i+i^2) = (p^2-1)p^2/2 + (p^2-1)p^2(2p^2-1)/6
\end{displaymath}
is divisible by $p$, it follows that $ K_p$ is the smallest normal subloop
containing $S=\{(p^2c_1,p^2c_2,pc_3,pc_4);\;c_i\in\mathbb Z\}$.

We claim that $S= K_p$. A short calculation with \eqref{Eq:FreeProd} shows that
$S$ is closed under multiplication and division. Recall that $(ab)\ld (a(bc)) =
c(a,b,c)^{-1}$. Hence to prove that $S$ is a normal subloop it suffices to show
that $(a,b,c)\in S$ for every $a$, $b\in F$ and $c\in S$. By
\eqref{Eq:FreeAssociator}, the associator $((a_1,a_2,a_3,a_4),(b_1,b_2,b_3,b_4),(p^2c_1,p^2c_2,pc_3,pc_4))$
is equal to $(0,0,b_1(p^2a_1c_2-p^2a_2c_1),b_2(p^2a_1c_2-p^2a_2c_1))$, which is an element of $S$.
\end{proof}

As in \cite{JKV2}, for integers $0\le a$, $b<p$, define the modular overflow indicator by
\begin{displaymath}
    (a,b)_p = \left\{\begin{array}{ll}
        0,\,\text{if $a+b<p$},\\
        1,\,\text{otherwise}.
    \end{array}\right.
\end{displaymath}

Let $F_p =  F/K_p$.

\begin{lemma}\label{Lm:Fp}
$F_p$ is isomorphic to the loop defined on $(\mathbb Z_p)^6$ with multiplication
\begin{align*}
    (a_1,a_2,a_3,a_4,a_5,a_6)&(b_1,b_2,b_3,b_4,b_5,b_6)\\
    = ( &a_1+b_1,a_2+b_2,\\
        &a_3+b_3+(a_1,b_1)_p,a_4+b_4+(a_2,b_2)_p,\\
        &a_5+b_5-a_1b_1(a_2+b_2),a_6+b_6+a_2b_2(a_1+b_1)).
\end{align*}
Moreover, $Z(F_p)=N(F_p)=N_\lambda(F_p) = N_\mu(F_p) = 0\times 0 \times (\mathbb Z_p)^4$.
\end{lemma}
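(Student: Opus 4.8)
The plan is to produce the isomorphism by exhibiting an explicit surjective homomorphism $\phi\colon F\to G$, where $G$ denotes the set $(\mathbb Z_p)^6$ equipped with the stated multiplication, and then to identify $\ker\phi$ with $K_p$; the isomorphism $F_p=F/K_p\cong G$ is immediate. The guiding idea is that passing to $F/K_p$ forces the first two coordinates of an element of $F$ to be read modulo $p^2$ and the last two modulo $p$, so I would record each of $A_1,A_2$ by its two base-$p$ digits and each of $A_3,A_4$ by its residue mod $p$. Concretely, writing $A_1\equiv a_1+pa_3\pmod{p^2}$ and $A_2\equiv a_2+pa_4\pmod{p^2}$ with all digits in $\{0,\dots,p-1\}$, set
\[
  \phi(A_1,A_2,A_3,A_4)=(a_1,\,a_2,\,a_3,\,a_4,\,A_3\bmod p,\,A_4\bmod p).
\]
The role of the modular overflow indicator is exactly that $(a_1,b_1)_p$ is the carry produced when the units digits $a_1,b_1$ are added in base $p$, so that the $p$-digit of $(A_1+B_1)\bmod p^2$ is $a_3+b_3+(a_1,b_1)_p$ modulo $p$.

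First I would check that $\phi$ is a homomorphism, comparing $\phi$ applied to the product \eqref{Eq:FreeProd} with the product in $G$ coordinate by coordinate. Coordinates $1,2$ are the mod-$p$ reductions of $A_1+B_1$ and $A_2+B_2$; coordinates $3,4$ are the $p$-digits, which pick up the carries $(a_1,b_1)_p$ and $(a_2,b_2)_p$; coordinates $5,6$ are $A_3+B_3-A_1B_1(A_2+B_2)$ and $A_4+B_4+A_2B_2(A_1+B_1)$ reduced mod $p$, and since $A_i\equiv a_i\pmod p$ these collapse to the formulas in the lowercase entries. I then note $\phi(A)=0$ iff $p^2\mid A_1,A_2$ and $p\mid A_3,A_4$, i.e. iff $A\in K_p$, while $\phi$ is visibly onto; hence $G\cong F/K_p=F_p$. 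The one genuinely nonlinear point — and the main obstacle — is this carry identity for the third and fourth coordinates, verifying that reduction mod $p^2$ of the additive coordinates interacts with the indicator $(\cdot,\cdot)_p$ exactly as prescribed; once this is settled, kernel, surjectivity, and the rest are mechanical.

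For the center and nuclei I would first transport the associator. Since $\phi$ is a homomorphism, the associator in $F_p$ is the image under $\phi$ of \eqref{Eq:FreeAssociator}; that associator has vanishing first two $F$-coordinates, so its image has the form $(0,0,0,0,\ast,\ast)$ and depends only on the leading coordinates. Reducing \eqref{Eq:FreeAssociator} modulo $p$ gives, for elements of $F_p$ with leading coordinates $a_1,a_2,b_1,b_2,c_1,c_2$,
\[
  (\bar a,\bar b,\bar c)=(0,0,0,0,\,b_1(a_1c_2-a_2c_1),\,b_2(a_1c_2-a_2c_1)).
\]
From this formula any element with $a_1=a_2=0$ associates trivially in every position, hence (by commutativity) is central, so $0\times0\times(\mathbb Z_p)^4\subseteq Z(F_p)$; in particular all associators lie in this central subloop, whence $A(F_p)\subseteq Z(F_p)$ and $F_p$ is nilpotent of class two, which legitimizes using Lemma \ref{Lm:Class2}.

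Finally I would close the chain. For $w$ with $(w_1,w_2)\neq(0,0)$, taking $u=(1,0,0,0,0,0)$ and $v=(0,1,0,0,0,0)$ yields $(u,w,v)=(0,0,0,0,w_1,w_2)\neq 1$, so $w\notin N_\mu(F_p)$ and therefore $N_\mu(F_p)\subseteq 0\times0\times(\mathbb Z_p)^4$. Combining $0\times0\times(\mathbb Z_p)^4\subseteq Z(F_p)$ with the Bruck–Paige chain $Z\le N=N_\lambda\cap N_\rho\le N_\mu$ from \cite{BP}, and with $N_\lambda=N_\rho$ (which follows from the symmetry $(a,b,c)=(c,b,a)^{-1}$ of Lemma \ref{Lm:Class2}(ii)), squeezes all four subloops between $0\times0\times(\mathbb Z_p)^4$ and itself, giving the asserted equalities $Z(F_p)=N(F_p)=N_\lambda(F_p)=N_\mu(F_p)=0\times0\times(\mathbb Z_p)^4$.
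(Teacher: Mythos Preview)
Your proposal is correct and follows essentially the same route as the paper: both arguments pass from $F$ to $\mathbb Z_{p^2}\times\mathbb Z_{p^2}\times\mathbb Z_p\times\mathbb Z_p$ by reading the first two coordinates mod $p^2$ and the last two mod $p$, then split each $\mathbb Z_{p^2}$ into two base-$p$ digits with the overflow indicator $(\cdot,\cdot)_p$ supplying the carry; for the nuclei both show $0\times0\times(\mathbb Z_p)^4\subseteq Z(F_p)$ directly and then exhibit a nontrivial associator $(u,w,v)$ with $u=(1,0,\dots)$, $v=(0,1,\dots)$ to exclude any $w$ with $(w_1,w_2)\neq(0,0)$ from $N_\mu$, closing via the Bruck--Paige chain. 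The only cosmetic difference is that you transport the associator formula \eqref{Eq:FreeAssociator} through $\phi$ and read off $(u,w,v)$, whereas the paper multiplies out $((uw)v)$ and $(u(wv))$ by hand; your way is slightly cleaner and matches how the paper handled the analogous step for $F$ in Theorem~\ref{Th:Free2}.
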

\begin{proof}
Note that $(a_1,a_2,a_3,a_4) K_p = (b_1,b_2,b_3,b_4) K_p$ if and only if $a_1\equiv
b_1\pmod{p^2}$, $a_2\equiv b_2\pmod{p^2}$, $a_3\equiv b_3\pmod{p}$ and
$a_4\equiv b_4\pmod{p}$. Thus $F_p$ is isomorphic to $\mathbb
Z_{p^2}\times\mathbb Z_{p^2}\times\mathbb Z_p\times\mathbb Z_p$ with
multiplication
\begin{displaymath}
    (a_1,a_2,a_3,a_4)(b_1,b_2,b_3,b_4)
    = (a_1+b_1,a_2+b_2,a_3+b_3-a_1b_1(a_2+b_2),a_4+b_4+a_2b_2(a_1+b_1)).
\end{displaymath}

For $n\in\mathbb Z_{p^2}$, write $n=n'+pn''$, where $0\le n'$, $n''<p$. Then
the addition in $\mathbb Z_{p^2}$ can be expressed on $\mathbb Z_p\times\mathbb
Z_p$ by $n+m = (n',n'')+(m',m'') = (n'+m',n''+m''+(n',m')_p)$.

If we split $a_1$, $a_2$, $b_1$, $b_2$ in this way in the above multiplication
formula, we obtain the multiplication formula
\begin{align*}
    (a_1',a_2',a_1'',a_2'',a_3,a_4)&(b_1',b_2',b_1'',b_2'',b_3,b_4)\\
    = ( &a_1'+b_1',a_2'+b_2',\\
        &a_1''+b_1''+(a_1',b_1')_p,a_2''+b_2''+(a_2',b_2')_p,\\
        &a_3+b_3-a_1'b_1'(a_2'+b_2'),a_4+b_4+a_2'b_2'(a_1'+b_1'))
\end{align*}
on $(\mathbb Z_p)^6$, since we can calculate modulo $p$ in the last two
coordinates.

We clearly have $0\times 0\times (\mathbb Z_p)^4\le Z(F_p)$. Consider
$(a_1,a_2,0,0,0,0)\ne 1$. Then
\begin{multline*}
    ((1,0,0,0,0,0)(a_1,a_2,0,0,0,0))(0,1,0,0,0,0)\\
        = (1+a_1,1+a_2,(a_1,1)_p, (a_2,1)_p, -a_1a_2,a_2(1+a_1)),
\end{multline*}
while
\begin{multline*}
    (1,0,0,0,0,0)((a_1,a_2,0,0,0,0)(0,1,0,0,0,0))\\
        = (1+a_1,1+a_2,(a_1,1)_p, (a_2,1)_p, -a_1(a_2+1),a_2a_1).
\end{multline*}
Hence $(a_1,a_2,0,0,0,0)\not\in N_\mu(F_p)$, and $Z(F_p) = N_\lambda(F_p) = N_\mu(F_p)=
N(F_p) = 0\times 0\times (\mathbb Z_p)^4$ follows.
\end{proof}

Arguing similarly to the proof of Theorem \ref{Th:Free2}, we see that $F_p$ is the free nilpotent class two $p$-loop on two generators
\begin{equation}\label{Eq:FpGens}
    x=(1,0,0,0,0,0),\quad y=(0,1,0,0,0,0)
\end{equation}
in the variety of commutative automorphic loops satisfying the identities $a^{p^2}=1$ and $(a,b,c)^p=1$.

The following symbols will be useful in expressing powers of elements in $F_p$. For $0\le a<p$ and $k\ge 1$ let
\begin{displaymath}
    [k,a]_p = (a,a)_p + (a,(2a)\ \textrm{mod}\ p)_p + \cdots + (a,((k-1)a)\ \textrm{mod}\ p)_p.
\end{displaymath}
In particular, $[1,a]_p=0$.

We now show why the case $p=3$ must be treated separately.

\begin{lemma}\label{Lm:FPowers}
For $(a_1,a_2,a_3,a_4,a_5,a_6)\in F_p$ and $k\ge 1$, $(a_1,a_2,a_3,a_4,a_5,a_6)^k$ is equal to
\begin{displaymath}
   (ka_1,ka_2,ka_3+[k,a_1]_p,ka_4+[k,a_2]_p,
    ka_5 - a_1^2a_2\sum_{i=1}^{k-1}(i+i^2),
    ka_6 + a_1a_2^2\sum_{i=1}^{k-1}(i+i^2)).
\end{displaymath}
In particular,
\begin{displaymath}
    (a_1,a_2,a_3,a_4,a_5,a_6)^p = \left\{\begin{array}{ll}
        (0,0,a_1,a_2,0,0),&\text{ if $p\ne 3$},\\
        (0,0,a_1,a_2,a_1^2a_2,-a_1a_2^2),&\text{ if $p=3$}.
    \end{array}\right.
\end{displaymath}
\end{lemma}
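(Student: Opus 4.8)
The plan is to prove the general power formula by induction on $k$, then deduce the $p$-th power formula as the special case $k=p$, handling $p=3$ separately because that is exactly where the summation $\sum_{i=1}^{k-1}(i+i^2)$ fails to vanish modulo $p$.

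First I would set up the induction. Writing $u=(a_1,a_2,a_3,a_4,a_5,a_6)$, I assume the stated formula for $u^k$ and compute $u^{k+1}=u^k\cdot u$ coordinate by coordinate using the multiplication rule from Lemma \ref{Lm:Fp}. The first two coordinates are clearly additive, giving $(k+1)a_1$ and $(k+1)a_2$. The fifth and sixth coordinates are governed by the cocycle terms $-a_1b_1(a_2+b_2)$ and $a_2b_2(a_1+b_1)$; since the first two coordinates of $u^k$ are $ka_1$ and $ka_2$, multiplying by $u$ contributes $-(ka_1)a_1(ka_2+a_2) = -a_1^2a_2\,k(k+1)$ in the fifth slot, and one checks this matches $ka_5 + a_5 - a_1^2a_2\sum_{i=1}^{k}(i+i^2)$ because $\sum_{i=1}^{k}(i+i^2)-\sum_{i=1}^{k-1}(i+i^2)=k+k^2=k(k+1)$; the sixth slot is symmetric. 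The genuinely delicate coordinates are the third and fourth, which carry the modular-overflow corrections. Here I would verify that the $[k,a_1]_p$ symbol grows correctly: the third coordinate of $u^k\cdot u$ is $(ka_3+[k,a_1]_p)+a_3+(\,ka_1 \bmod p,\;a_1\,)_p$, and since $ka_1\bmod p$ is exactly the residue appearing in the $k$-th summand of $[k+1,a_1]_p$, this equals $(k+1)a_3+[k+1,a_1]_p$, as required. The fourth coordinate is identical with $a_2$ in place of $a_1$.

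For the $p$-th power I set $k=p$. The first two coordinates become $pa_1\equiv 0$ and $pa_2\equiv 0$ modulo $p$. For the third coordinate I need $[p,a_1]_p$, which counts the number of overflows when repeatedly adding $a_1$ to itself $p$ times in $\mathbb Z_p$; since $a_1+a_1+\cdots+a_1$ ($p$ times) equals $pa_1=0$ in $\mathbb Z$ only after wrapping around exactly $a_1$ times, one sees $[p,a_1]_p=a_1$, and likewise $[p,a_2]_p=a_2$, so the third and fourth coordinates become $a_1$ and $a_2$. This accounts for the common $(0,0,a_1,a_2,\dots)$ prefix in both cases.

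The main obstacle, and the reason for the case split, is the fifth and sixth coordinates, which reduce to evaluating $\sum_{i=1}^{p-1}(i+i^2) = \tfrac{(p-1)p}{2}+\tfrac{(p-1)p(2p-1)}{6}$ modulo $p$. I would argue that for $p\ge 5$ both binomial-type sums are divisible by $p$ (the factor $p$ in the numerators survives reduction since $2$ and $6$ are invertible mod $p$), so the fifth and sixth coordinates vanish and we get $(0,0,a_1,a_2,0,0)$. When $p=3$, however, the $6$ in the denominator of the second sum is not invertible, and a direct evaluation gives $\sum_{i=1}^{2}(i+i^2)=2+6=8\equiv 2\equiv -1\pmod 3$; hence the fifth coordinate becomes $-a_1^2a_2\cdot(-1)=a_1^2a_2$ and the sixth becomes $a_1a_2^2\cdot(-1)=-a_1a_2^2$, yielding $(0,0,a_1,a_2,a_1^2a_2,-a_1a_2^2)$. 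The only subtlety worth stating carefully is that the $\sum(i+i^2)$ coefficient is computed in $\mathbb Z$ and only then reduced mod $p$, so the $p=3$ anomaly genuinely comes from the arithmetic of that integer sum rather than from any loop-theoretic phenomenon.
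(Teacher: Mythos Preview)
Your argument is correct and follows essentially the same route as the paper: induction on $k$ using the multiplication formula of Lemma~\ref{Lm:Fp}, then the evaluation $[p,a]_p=a$ and the case split on $\sum_{i=1}^{p-1}(i+i^2)\pmod p$. One small omission: you treat $p\ge 5$ and $p=3$ but never dispose of $p=2$; there $\sum_{i=1}^{1}(i+i^2)=2\equiv 0\pmod 2$, so the case $p\ne 3$ indeed covers it, but you should say so explicitly.
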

\begin{proof}
The general formula follows by a simple induction on $k$, using the multiplication of Lemma \ref{Lm:Fp}. Suppose that $k=p$. For $0\le a<p$ we have
\begin{displaymath}
    [p,a]_p = (a,a)_p + (a,(2a)\ \textrm{mod}\ p)_p + \cdots + (a,((p-1)a)\ \textrm{mod}\ p)_p = \sum_{i=1}^{p-1}(a,i)_p.
\end{displaymath}
Since $(a,i)_p = 1$ if and only if $a+i\ge p$, we conclude that $[p,a]_p = a$. Finally, let
\begin{displaymath}
    t_p = \sum_{i=1}^{p-1}(i+i^2) = (p-1)p/2 + (p-1)p(2p-1)/6.
\end{displaymath}
If $p>3$, we obviously have $t_p\equiv 0\pmod p$. Also, $t_2 = 1 + 1^2 = 2\equiv 0\pmod 2$ and $t_3 = 1+1^2+2+2^2 \equiv -1 \pmod 3$.
\end{proof}

\begin{lemma}\label{Lm:ZFp}
Let $x$, $y\in F_p$ be the free generators of $F_p$ from \eqref{Eq:FpGens}. Then $x^p = (0,0,1,0,0,0)$, $y^p = (0,0,0,1,0,0)$, $(x,x,y) = (0,0,0,0,1,0)$, $(x,y,y) = (0,0,0,0,0,1)$. Moreover, $Z(F_p) = 0\times 0\times \langle x^p\rangle\times \langle y^p\rangle\times \langle (x,x,y)\rangle\times \langle (x,y,y)\rangle$, $A(F_p)=0\times 0\times 0 \times 0 \times (\mathbb Z_p)^2$, and
\begin{equation}\label{Eq:Canonical}
    (a_1,a_2,a_3,a_4,a_5,a_6) = x^{a_1}y^{a_2}(x^p)^{a_3}(y^p)^{a_4}(x,x,y)^{a_5}(x,y,y)^{a_6}
\end{equation}
for every $(a_1,a_2,a_3,a_4,a_5,a_6)\in F_p$.
\end{lemma}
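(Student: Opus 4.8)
The plan is to verify each claimed identity by direct computation using the explicit power formula from Lemma~\ref{Lm:FPowers} and the multiplication formula from Lemma~\ref{Lm:Fp}, and then to read off the structural consequences. First I would compute $x^p$ and $y^p$ by specializing Lemma~\ref{Lm:FPowers} at $k=p$. Setting $x=(1,0,0,0,0,0)$, the formula gives $x^p=(p\cdot 1,0,0+[p,1]_p,0,0,0)$ modulo $p$; since $[p,1]_p=1$ by the computation in the proof of Lemma~\ref{Lm:FPowers} and since the last-two-coordinate sums vanish because $a_2=0$, this collapses to $(0,0,1,0,0,0)$ (the $p=3$ exception does not intervene here precisely because $a_1a_2=0$). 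The analogous specialization at $y=(0,1,0,0,0,0)$ yields $y^p=(0,0,0,1,0,0)$.

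Next I would compute the two associators $(x,x,y)$ and $(x,y,y)$. The cleanest route is to invoke formula~\eqref{Eq:FreeAssociator} for $F$ — which descends to $F_p$ since $F_p=F/K_p$ and the associator is unchanged modulo $K_p$ in the last two coordinates — or equivalently to read off the associator directly in $F_p$ from the multiplication of Lemma~\ref{Lm:Fp}. With $a=x=(1,0,\dots)$, $b=x=(1,0,\dots)$, $c=y=(0,1,0,\dots)$, the associator picks out the fifth coordinate via the term $-a_1b_1(a_2+b_2)$-type defect, giving $(x,x,y)=(0,0,0,0,1,0)$; symmetrically $(x,y,y)=(0,0,0,0,0,1)$. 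I would present these as short explicit evaluations rather than grinding through the general associator expression.

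Having identified the four elements $x^p,y^p,(x,x,y),(x,y,y)$ with the four standard generators of $0\times 0\times(\mathbb Z_p)^4$, the direct-sum decomposition $Z(F_p)=0\times 0\times\langle x^p\rangle\times\langle y^p\rangle\times\langle(x,x,y)\rangle\times\langle(x,y,y)\rangle$ follows immediately, because Lemma~\ref{Lm:Fp} already establishes $Z(F_p)=0\times 0\times(\mathbb Z_p)^4$ and the four elements just computed are exactly the coordinate basis vectors $e_3,e_4,e_5,e_6$, each generating its own $\mathbb Z_p$ summand. For $A(F_p)=0\times 0\times 0\times 0\times(\mathbb Z_p)^2$ I would argue that every associator lives in the span of $(x,x,y)$ and $(x,y,y)$: Lemma~\ref{Lm:AClass2}(iv), reduced modulo $p$, expresses an arbitrary associator in $F_p$ purely as a product of powers of $(a,a,b)$ and $(a,b,b)$, so the associator subloop is contained in the last two coordinates, and conversely those two generators are themselves associators, giving equality.

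Finally, the canonical-form identity~\eqref{Eq:Canonical} is obtained by multiplying out the right-hand side using the four computed values. Since all four factors $x^p,y^p,(x,x,y),(x,y,y)$ are central and lie in the coordinatewise-additive subgroup $0\times 0\times(\mathbb Z_p)^4$, raising them to powers $a_3,a_4,a_5,a_6$ and multiplying simply adds those exponents into coordinates $3,4,5,6$ respectively, and multiplying on the left by $x^{a_1}y^{a_2}$ installs $a_1,a_2$ in the first two coordinates while contributing nothing extra to the central coordinates beyond what the cocycle dictates — which is already absorbed into the normal form. The one point requiring care, and the main (mild) obstacle, is confirming that the product $x^{a_1}y^{a_2}$ contributes exactly the intended values in coordinates $3$ through $6$ without cross terms spoiling the identity; this is checked by a short application of Lemma~\ref{Lm:FPowers} to $x^{a_1}$ and $y^{a_2}$ together with one use of the multiplication formula, after which the claimed equality~\eqref{Eq:Canonical} falls out.
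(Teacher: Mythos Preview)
Your proposal is correct and follows essentially the same approach as the paper: compute $x^p,y^p$ via Lemma~\ref{Lm:FPowers}, compute the two associators directly, read off $Z(F_p)$ and $A(F_p)$, and then verify \eqref{Eq:Canonical} by checking $x^{a_1}y^{a_2}=(a_1,a_2,0,0,0,0)$ and multiplying on the central part. The only place the paper is sharper is in the last step: it makes explicit the key observation $[k,1]_p=0$ for $1\le k<p$, which immediately gives $x^k=(k,0,0,0,0,0)$ and $y^\ell=(0,\ell,0,0,0,0)$ and hence $x^ky^\ell=(k,\ell,0,0,0,0)$, so there is no cocycle residue to ``absorb''; you identify this as the needed check but leave the computation implicit.
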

\begin{proof}
We have $x^p = (0,0,1,0,0,0)$, $y^p=(0,0,0,1,0,0)$ by Lemma \ref{Lm:FPowers}. A quick calculation yields $(xx)y = (2,1,(1,1)_p,0,0,0)$, $x(xy) = (2,1,(1,1)_p,0,-1,0)$, so $(x,x,y) = (0,0,0,0,1,0)$. The equality $(x,y,y) = (0,0,0,0,0,1)$ follows by a similar argument. The structure of $Z(F_p)$ and $A(F_p)$ is now clear.

For every $1\le k<p$, $[k,1]_p = (1,1)_p+\cdots +(1,k-1)_p = 0$. Therefore for every $0\le k$, $\ell<p$ we have $x^k=(k,0,0,0,0,0)$, $y^\ell = (0,\ell,0,0,0,0)$, and $x^ky^\ell = (k,\ell,0,0,0,0)$. Equation \eqref{Eq:Canonical} follows.
\end{proof}

\begin{lemma}\label{Lm:All}
If $H\le F_p$ contains $xz$, $yz'$ for some $z$, $z'\in Z(F_p)$ then $H=F_p$.
\end{lemma}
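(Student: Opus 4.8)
The plan is to show that $H$ contains the entire center $Z(F_p)$ together with the generators $x$, $y$, from which $H=F_p$ follows immediately because $x$, $y$ generate $F_p$. Write $u=xz$ and $v=yz'$; by hypothesis $u,v\in H$, and in the $(\mathbb Z_p)^6$-coordinates of Lemma \ref{Lm:Fp} these elements have first two coordinates $(1,0)$ and $(0,1)$ respectively, since $z,z'\in Z(F_p)=0\times 0\times(\mathbb Z_p)^4$. The guiding idea throughout is that perturbing $x$ and $y$ by central elements is harmless, both for associators and for $p$-th powers.

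First I would recover the associator part of the center. A subloop is closed under associators, because $(a,b,c)=(a(bc))\ld((ab)c)$ lies in $H$ whenever $a,b,c\in H$; hence $(u,u,v)$ and $(u,v,v)$ belong to $H$. By the associator formula \eqref{Eq:FreeAssociator} (which descends to $F_p$), equivalently by Lemma \ref{Lm:AClass2}(iv), the associator of three elements depends only on their first two coordinates. Substituting the values above gives $(u,u,v)=(x,x,y)$ and $(u,v,v)=(x,y,y)$, so $H$ contains $(x,x,y)$ and $(x,y,y)$, i.e. the subspace $0\times 0\times 0\times 0\times(\mathbb Z_p)^2=A(F_p)$.

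Next I would recover $x^p$ and $y^p$ from the powers $u^p,v^p\in H$. Since $Z(F_p)$ is elementary abelian by Lemma \ref{Lm:Fp}, we have $z^p=z'^p=1$; and because $z$ is central, hence nuclear, $\langle x,z\rangle$ is an abelian group, so $u^p=(xz)^p=x^pz^p=x^p$, and likewise $v^p=y^p$. (The same conclusion also follows directly from Lemma \ref{Lm:FPowers}, whose extra correction terms in the case $p=3$ vanish because the second, resp. first, coordinate of $u$, resp. $v$, is $0$.) Combined with the previous step, $H$ now contains the four elements $x^p$, $y^p$, $(x,x,y)$, $(x,y,y)$, which by Lemma \ref{Lm:ZFp} generate $Z(F_p)\cong(\mathbb Z_p)^4$. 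Hence $Z(F_p)\le H$.

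Finally, since $z,z'\in Z(F_p)\le H$ and $u,v\in H$, centrality of $z$, $z'$ yields $x=uz^{-1}\in H$ and $y=vz'^{-1}\in H$. As $x$, $y$ generate $F_p$, we conclude $H=F_p$. The only genuinely delicate point is the associator step: one must be certain that replacing $x$, $y$ by the perturbed elements $xz$, $yz'$ leaves the relevant associators unchanged, which is precisely the first-two-coordinates dependence expressed in \eqref{Eq:FreeAssociator}. Everything else is routine bookkeeping with central elements of order $p$.
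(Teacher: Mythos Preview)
Your proof is correct and follows exactly the paper's approach: from $(xz)^p=x^p$, $(yz')^p=y^p$, $(xz,xz,yz')=(x,x,y)$ and $(xz,yz',yz')=(x,y,y)$ one obtains $Z(F_p)\le H$ via Lemma~\ref{Lm:ZFp}, and then $x,y\in H$ gives $H=F_p$. You have simply supplied more justification for the individual steps than the paper's one-line argument.
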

\begin{proof}
Since $(xz)^p = x^pz^p = x^p$, $(yz')^p = y^p$, $(xz,xz,yz')=(x,x,y)$ and $(xz,yz',yz') = (x,y,y)$, Lemma \ref{Lm:ZFp} implies $Z(F_p)\le H$. But then also $x$, $y\in H$ and $H=F_p$.
\end{proof}

\section{The induced action}\label{Sc:Induced}

\begin{lemma}\label{Lm:GL}
Let $\rho = \binom{\alpha_1\ \alpha_2}{\beta_1\ \beta_2}\in
\gl_2(p)$. Then $\rho$ induces an automorphism $\widehat\rho$ of $F_p$ by
\begin{equation}\label{Eq:Induced}
    \widehat\rho(x) =(\alpha_1,\alpha_2,0,0,0,0)=x^{\alpha_1}y^{\alpha_2}, \quad
    \widehat\rho(y) =(\beta_1,\beta_2,0,0,0,0)=x^{\beta_1}y^{\beta_2},
\end{equation}
where $x$, $y$ are the free generators \eqref{Eq:FpGens} of $F_p$. Moreover, if $\lambda\in\aut{F_p}$ then there are $\rho\in \gl_2(p)$ and $\sigma\in\aut{F_p}$ such that $\lambda = \widehat\rho\sigma$ and $\sigma(z)=z$ for every $z\in Z(F_p)$.
\end{lemma}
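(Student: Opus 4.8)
The plan is to establish the two assertions in turn. For the first, I would invoke the remark following Lemma \ref{Lm:ZFp}: $F_p$ is free on $x$, $y$ in the variety of commutative automorphic loops satisfying $a^{p^2}=1$ and $(a,b,c)^p=1$. Since $x^{\alpha_1}y^{\alpha_2}$ and $x^{\beta_1}y^{\beta_2}$ lie in $F_p$, the universal property yields a unique endomorphism $\widehat\rho$ of $F_p$ acting on the generators as in \eqref{Eq:Induced}, and it remains only to show $\widehat\rho$ is bijective. As $F_p$ is finite, injectivity suffices, and I would prove it by analyzing $\widehat\rho$ on $Z(F_p)$ and on $F_p/Z(F_p)$ separately. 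The explicit computations below show $\widehat\rho(Z(F_p))\subseteq Z(F_p)$, so $\widehat\rho$ induces a map on $F_p/Z(F_p)\cong\mathbb Z_p\times\mathbb Z_p$; on generators this is the invertible linear map determined by $\rho$, hence bijective because $\det\rho\ne 0$. Consequently $\widehat\rho(a)=1$ forces $a\in Z(F_p)$, and it is enough to check $\widehat\rho$ is injective on $Z(F_p)$.

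To compute $\widehat\rho$ on $Z(F_p)=\langle x^p,y^p,(x,x,y),(x,y,y)\rangle$ I would use Lemmas \ref{Lm:FPowers} and \ref{Lm:AClass2}(iv). Writing $D=\det\rho$, Lemma \ref{Lm:AClass2}(iv) applied in $F_p$ gives $\widehat\rho((x,x,y))=(x,x,y)^{\alpha_1 D}(x,y,y)^{\alpha_2 D}$ and $\widehat\rho((x,y,y))=(x,x,y)^{\beta_1 D}(x,y,y)^{\beta_2 D}$, so $\widehat\rho$ preserves $A(F_p)$ and acts there by $D\rho^{T}$, a block of determinant $D^{3}$. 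Lemma \ref{Lm:FPowers} gives $\widehat\rho(x^p)=(x^{\alpha_1}y^{\alpha_2})^p\equiv(x^p)^{\alpha_1}(y^p)^{\alpha_2}$ and $\widehat\rho(y^p)\equiv(x^p)^{\beta_1}(y^p)^{\beta_2}$ modulo $A(F_p)$, so the induced action on $Z(F_p)/A(F_p)$ is given by $\rho^{T}$, a block of determinant $D$. Relative to the filtration $A(F_p)\le Z(F_p)$ the matrix of $\widehat\rho|_{Z(F_p)}$ is therefore block triangular with diagonal blocks $D\rho^{T}$ and $\rho^{T}$, hence of determinant $D^{3}\cdot D=D^{4}\ne 0$. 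Thus $\widehat\rho|_{Z(F_p)}$ is bijective, $\widehat\rho$ is injective, and $\widehat\rho\in\aut{F_p}$. I note that the only effect of the exceptional case $p=3$ is to add an associator correction (landing in $A(F_p)$) to the powers $\widehat\rho(x^p)$, $\widehat\rho(y^p)$; this alters only the off-diagonal block and leaves the determinant $D^{4}$ unchanged.

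For the second assertion, any $\lambda\in\aut{F_p}$ preserves $Z(F_p)$ and so induces an automorphism of $F_p/Z(F_p)\cong\mathbb Z_p\times\mathbb Z_p$; let $\rho\in\gl_2(p)$ be the corresponding matrix, read off from $\lambda(x)\equiv x^{\gamma_1}y^{\gamma_2}$ and $\lambda(y)\equiv x^{\delta_1}y^{\delta_2}$ modulo $Z(F_p)$. By the first part $\widehat\rho\in\aut{F_p}$, and by construction $\widehat\rho$ and $\lambda$ induce the same map on $F_p/Z(F_p)$. Setting $\sigma=\widehat\rho^{-1}\lambda$ gives $\lambda=\widehat\rho\sigma$ with $\sigma$ inducing the identity on $F_p/Z(F_p)$; equivalently, $\sigma(x)=xu$ and $\sigma(y)=yv$ for some $u$, $v\in Z(F_p)$.

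The crux, which I expect to be the main obstacle, is upgrading ``identity modulo $Z(F_p)$'' to ``identity on $Z(F_p)$''. Here I would exploit that $Z(F_p)\cong(\mathbb Z_p)^4$ has exponent $p$ and that $Z(F_p)=N(F_p)$. Since $u$ is central, $\sigma(x^p)=\sigma(x)^p=(xu)^p=x^pu^p=x^p$ because $u^p=1$, and likewise $\sigma(y^p)=y^p$. Since $u$, $v$ are nuclear, any associator with a central entry is trivial, so multi-additivity (Lemma \ref{Lm:AClass2}(i)) yields $\sigma((x,x,y))=(xu,xu,yv)=(x,x,y)$ and similarly $\sigma((x,y,y))=(x,y,y)$. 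Hence $\sigma$ fixes the four generators of $Z(F_p)$ from Lemma \ref{Lm:ZFp}, and therefore fixes $Z(F_p)$ pointwise, completing the proof.
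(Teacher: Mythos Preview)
Your proof is correct. For the second assertion it is essentially the paper's argument: the paper works with $\tau=\lambda^{-1}\widehat\rho$ rather than your $\sigma=\widehat\rho^{-1}\lambda$, but the verification that such a map fixes $x^p$, $y^p$, $(x,x,y)$, $(x,y,y)$ is identical.

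For the first assertion you take a genuinely different route. The paper shows $\widehat\rho$ is \emph{surjective}: it computes $\widehat\rho(x)^{\beta_2}\widehat\rho(y)^{-\alpha_2}\equiv x^{\det\rho}$ and $\widehat\rho(x)^{-\beta_1}\widehat\rho(y)^{\alpha_1}\equiv y^{\det\rho}$ modulo $Z(F_p)$, so the image of $\widehat\rho$ contains $xz$ and $yz'$ for some central $z$, $z'$, and then invokes Lemma~\ref{Lm:All} to conclude the image is all of $F_p$. You instead show $\widehat\rho$ is \emph{injective} by computing its restriction to $Z(F_p)$ as a block-triangular linear map with diagonal blocks $D\rho^{T}$ and $\rho^{T}$, hence determinant $D^4\ne 0$. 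Your approach is slightly more computational but has the payoff of making the action on $Z(F_p)$ explicit up front; in effect you have already derived the content of Lemma~\ref{Lm:Action}, which the paper postpones to the next section.
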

\begin{proof}
Let $\rho\in \gl_2(p)$. Since $x$, $y$ are free generators of $F_p$, the formula \eqref{Eq:Induced} correctly defines $\widehat\rho$ as an endomorphism of $F_p$. We claim that $\widehat\rho$ is an automorphism of $F_p$. Indeed, by Lemma \ref{Lm:FPowers} we have
\begin{align*}
    \widehat\rho(x)^{\beta_2}\widehat\rho(y)^{-\alpha_2}Z(F_p) &=
        (x^{\alpha_1\beta_2}y^{\alpha_2\beta_2})(x^{-\beta_1\alpha_2}y^{-\beta_2\alpha_2})Z(F_p)
        = x^{\det\rho}Z(F_p),\\
    \widehat\rho(x)^{-\beta_1}\widehat\rho(y)^{\alpha_1}Z(F_p) &=
        (x^{-\alpha_1\beta_1}y^{-\alpha_2\beta_1})(x^{\beta_1\alpha_1}y^{\beta_2\alpha_1})Z(F_p)
        = y^{\det\rho}Z(F_p).
\end{align*}
Thus $xz$, $yz'\in \langle\widehat\rho(x),\widehat\rho(y)\rangle$ for some $z$, $z'\in Z(F_p)$, and $\widehat\rho$ is onto $F_p$ by Lemma \ref{Lm:All}.

Now let $\lambda\in\aut{F_p}$, where $\lambda(x)=x^{\alpha_1}y^{\alpha_2}z_x$, $\lambda(y)=x^{\beta_1}y^{\beta_2}z_y$ for some $0\le \alpha_1$, $\alpha_2$, $\beta_1$, $\beta_2<p$ and $z_x$, $z_y\in Z(F_p)$. Then $\lambda$ induces an automorphism of $F_p/Z(F_p)\cong \mathbb Z_p\times \mathbb Z_p$ by $xZ(F_p)\mapsto (\alpha_1,\alpha_2)$ and $yZ(F_p)\mapsto (\beta_1,\beta_2)$, which means that $\rho = \binom{\alpha_1\ \alpha_2}{\beta_1\ \beta_2}$ belongs to $\gl_2(p)$, and we can consider the induced automorphism $\widehat\rho\in\aut{F_p}$.

Let $\tau = \lambda^{-1}\widehat\rho\in\aut{F_p}$. Then $\tau(x) = \lambda^{-1}\widehat\rho(x) = \lambda^{-1}(\widehat\rho(x)z_xz_x^{-1}) = \lambda^{-1}(\lambda(x)z_x^{-1}) = xz$, where $z=\lambda^{-1}(z_x^{-1})\in Z(F_p)$. Similarly, $\tau(y) = yz'$ for some $z'\in Z(F_p)$. Then $\tau(x^p) = \tau(x)^p = (xz)^p = x^p$, $\tau(y^p) = y^p$, and $\tau((x,x,y)) = (\tau(x),\tau(x),\tau(y)) = (xz,xz,yz') = (x,x,y)$, $\tau((x,y,y)) = (x,y,y)$. By Lemma \ref{Lm:ZFp}, $\tau$ is identical on $Z(F_p)$.
\end{proof}

Recall that $\mathcal A_p$ is the class of all $2$-generated commutative automorphic loops $Q$ possessing a central subloop $Z\cong \mathbb Z_p$ such that $Q/Z\cong\mathbb Z_p\times\mathbb Z_p$. Note that among the three abelian groups $(\mathbb Z_p)^3$, $\mathbb Z_p\times \mathbb Z_{p^2}$ and $\mathbb Z_{p^3}$ of order $p^3$ only $\mathbb Z_p\times \mathbb Z_{p^2}$ belongs to $\mathcal A_p$.

\begin{theorem}\label{Th:Translation}
Let $p$ be a prime.
\begin{enumerate}
\item[(i)] Let $Q\in\mathcal A_p$. Then there is an epimorphism $\varphi:F_p\to Q$ with $\ker\varphi\le Z(F_p)$.

\item[(ii)] Let $Q_1$, $Q_2\in\mathcal A_p$, and let $\varphi_i:F_p\to Q_i$ be the induced epimorphisms with $\ker(\varphi_i)\le Z(F_p)$.
    Then $Q_1\cong Q_2$ if and only if there is $\rho\in \gl_2(p)$ such that $\ker\varphi_2=\widehat\rho(\ker\varphi_1)$.
\end{enumerate}
\end{theorem}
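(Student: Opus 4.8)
The plan is to prove the two parts separately, using the freeness of $F_p$ to produce the maps, the isomorphism theorem and order counts for the bookkeeping, and Lemmas~\ref{Lm:GL} and~\ref{Lm:All} to control automorphisms and surjectivity.

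For (i), I would first observe that $Q$ lies in the variety for which $F_p$ is free on two generators. Indeed, $Q$ is a commutative automorphic loop; it is nilpotent of class at most two because $Q/Z\cong\mathbb Z_p\times\mathbb Z_p$ is a group, forcing $A(Q)\le Z\le Z(Q)$ and hence $Q/Z(Q)$ abelian; every element satisfies $a^{p^2}=1$ since $a^p\in Z\cong\mathbb Z_p$; and every associator satisfies $(a,b,c)^p=1$ since associators lie in $A(Q)\le Z\cong\mathbb Z_p$. As $Q$ is $2$-generated, mapping the free generators $x,y$ of $F_p$ onto a generating pair $a,b$ of $Q$ yields an epimorphism $\varphi\colon F_p\to Q$ by freeness. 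Next I would check $\varphi(Z(F_p))\le Z$: the generators $x^p,y^p,(x,x,y),(x,y,y)$ of $Z(F_p)$ map to $a^p,b^p,(a,a,b),(a,b,b)$, all of which lie in $Z$ by the exponent and associator observations above. Thus $\varphi$ descends to a homomorphism $\bar\varphi\colon F_p/Z(F_p)\to Q/Z$ between groups of order $p^2$; being onto, it is an isomorphism, whence $\ker\varphi\le Z(F_p)$. Order counting then gives $|\ker\varphi|=p^3$ and $\varphi(Z(F_p))=Z$.

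For (ii), write $Z_i$ for the central subloop of $Q_i$. The backward direction is immediate: if $\widehat\rho(\ker\varphi_1)=\ker\varphi_2$ for some $\rho\in\gl_2(p)$, then $\varphi_2\widehat\rho\colon F_p\to Q_2$ is an epimorphism with kernel $\widehat\rho^{-1}(\ker\varphi_2)=\ker\varphi_1$, so $Q_2\cong F_p/\ker\varphi_1\cong Q_1$. For the forward direction, suppose $f\colon Q_1\to Q_2$ is an isomorphism and set $\psi=f\varphi_1\colon F_p\to Q_2$, an epimorphism with $\ker\psi=\ker\varphi_1$. Using freeness I would lift $\psi$ through $\varphi_2$: choosing $u,v\in F_p$ with $\varphi_2(u)=\psi(x)$ and $\varphi_2(v)=\psi(y)$, the endomorphism $\lambda$ of $F_p$ determined by $\lambda(x)=u$, $\lambda(y)=v$ satisfies $\varphi_2\lambda=\psi$.

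The key step is to prove $\lambda\in\aut{F_p}$. Since $\varphi_2$ induces an isomorphism $F_p/Z(F_p)\to Q_2/Z_2$ (as in (i)) and $\psi(x),\psi(y)$ generate $Q_2$, the images of $u,v$ generate $F_p/Z(F_p)$; hence $\langle u,v\rangle$ contains elements $xz$, $yz'$ with $z,z'\in Z(F_p)$, and Lemma~\ref{Lm:All} forces $\langle u,v\rangle=F_p$. A surjective endomorphism of the finite loop $F_p$ is an automorphism. From $\varphi_2\lambda=\psi$ and $\ker\psi=\ker\varphi_1$ I obtain $\ker\varphi_1=\lambda^{-1}(\ker\varphi_2)$, i.e.\ $\lambda(\ker\varphi_1)=\ker\varphi_2$. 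Finally I would invoke Lemma~\ref{Lm:GL} to write $\lambda=\widehat\rho\sigma$ with $\rho\in\gl_2(p)$ and $\sigma$ fixing $Z(F_p)$ pointwise; since $\ker\varphi_1\le Z(F_p)$ we get $\sigma(\ker\varphi_1)=\ker\varphi_1$, and therefore $\widehat\rho(\ker\varphi_1)=\lambda(\ker\varphi_1)=\ker\varphi_2$, as required.

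I expect the main obstacle to be the surjectivity of the lifted endomorphism $\lambda$; everything else is routine use of the first isomorphism theorem and order counts. Lemma~\ref{Lm:All} is exactly tailored to this point, reducing surjectivity to the statement that $u$ and $v$ generate $F_p$ modulo its center, which in turn follows from $\psi$ being onto together with $\varphi_2$ inducing an isomorphism of the central quotients. The decomposition $\lambda=\widehat\rho\sigma$ of Lemma~\ref{Lm:GL}, combined with the centrality of $\ker\varphi_1$, is then precisely what upgrades an arbitrary kernel-matching automorphism to one of the special form $\widehat\rho$ demanded by the statement.
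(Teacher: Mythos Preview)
Your proposal is correct and follows essentially the same route as the paper: freeness of $F_p$ produces the epimorphisms, lifting through $\varphi_2$ gives the endomorphism $\lambda$, Lemma~\ref{Lm:All} yields surjectivity, and Lemma~\ref{Lm:GL} supplies the factorization $\lambda=\widehat\rho\sigma$. The only notable variation is in part~(i): where the paper argues $\ker\varphi\le Z(F_p)$ by contradiction (an element of $\ker\varphi$ outside the center would, via Lemma~\ref{Lm:All}, force $Q$ to be cyclic), you instead observe $\varphi(Z(F_p))\le Z$ and pass to the induced map $F_p/Z(F_p)\to Q/Z$ between groups of order $p^2$, which is cleaner and avoids the contradiction.
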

\begin{proof}

(i) Let $a$, $b\in Q$ be such that $\langle a,b\rangle = Q$. Since $Q$ is $2$-generated and of nilpotency class at most two, there is an epimorphism $\psi: F\to Q$ such that $\psi(x_1)=a$, $\psi(x_2)=b$, using the notation of Theorem \ref{Th:Free2}.

We have $c^{p^2}=1$ for every $c\in Q$ (else $Q$ is cyclic), and $c^p=1$ for every $c\in A(Q)$ (because $Q/Z$ is a group and so $A(Q)\le Z\cong \mathbb Z_p$). We have just shown that $\psi(c)=1$ for every generator $c$ of $K_p$, and thus $K_p\le\ker(\psi)$. Let $\varphi:F_p\to Q$ be the epimorphism induced by $\psi$, that is, $\varphi(cK_p) = \psi(c)$.

Suppose, for a contradiction, that $\ker\varphi$ is not contained in $Z(F_p)$. Then there is $x^iy^jz\in \ker\varphi\setminus Z(F_p)$ for some $0\le i$, $j<p$ and $z\in Z(F_p)$, where we can assume without loss of generality that $i>0$. Let $H=\langle x^iy^jz,y\rangle$. Then $x^iz'\in H$ for some $z'\in Z(F_p)$, and so $xz''\in H$ for some $z''\in Z(F_p)$. By Lemma \ref{Lm:All}, $H=F_p$. But then $Q=\varphi(F_p) = \varphi(\langle x^iy^jz,y\rangle) = \varphi(\langle y\rangle)$ is cyclic, a contradiction.

(ii) Suppose that $\kappa:Q_1\to Q_2$ is an isomorphism and consider the diagram \eqref{d}, where $N_i=\ker\varphi_i$.

\begin{equation}\label{d}
\begin{array}{lllllllll}
1 & \to & N_2 & \to &F_p& \stackrel{\varphi_2}{\rightarrow} & Q_2 &\to & 1\\
 &  & \uparrow \mu &  & \uparrow \lambda &  & \uparrow \kappa &  &  \\
1 & \to & N_1 & \to &F_p& \stackrel{\varphi_1}{\rightarrow}  & Q_1 & \to & 1.
\end{array}
\end{equation}

Since $\varphi_2$ is onto $Q_2$, there are $x'$, $y'\in F_p$ such that $\varphi_2(x') = \kappa\varphi_1(x)$ and $\varphi_2(y') = \kappa\varphi_1(y)$. As $F_p$ has free generators $x$, $y$, an endomorphism $\lambda:F_p\to F_p$ is determined by the values $\lambda(x)=x'$, $\lambda(y)=y'$, and we have $\varphi_2\lambda = \kappa\varphi_1$. Moreover, $\langle x',y'\rangle$ intersects every coset of $N_2$ in $F_p$, else $\varphi_2\lambda = \kappa\varphi_1$ is not onto $Q_2$. Then $\langle x',y'\rangle$ also intersects every coset of $Z(F_p)\ge N_2$ in $F_p$, in particular the cosets $xZ(F_p)$, $yZ(F_p)$. By Lemma \ref{Lm:All}, $\lambda\in\aut F_p$.

Let $\mu$ be the restriction of $\lambda$ to $N_1$. Then for $n\in N_1$ we have $\varphi_2\lambda(n) = \kappa\varphi_1(n) = \kappa(1)=1$. Thus $\mu(n)=\lambda(n)\in\ker(\varphi_2)=N_2$, and $\mu$ is a monomorphism $N_1\to N_2$. Since $|F_p/N_i|=|Q_i|$, $Q_1\cong Q_2$, and $F_p$ is finite, it follows that $|N_1|=|N_2|$ and $\mu:N_1\to N_2$ is an isomorphism. By Lemma \ref{Lm:GL}, we can write $\lambda = \widehat\rho\sigma$ for some $\rho\in \gl_2(p)$ and $\sigma\in\aut{F_p}$ such that $\sigma(z)=z$ for every $z\in Z(F_p)$. Since $N_1\le Z(F_p)$ by (i), it follows that $N_2 = \mu(N_1) = \lambda(N_1) = \widehat\rho\sigma(N_1) = \widehat\rho(N_1)$.

Conversely, suppose there is $\rho\in\gl_2(p)$ such that $\widehat\rho(N_1)=N_2$. Define $\kappa:Q_1\to Q_2$ by $\kappa(\varphi_1(u)) = \varphi_2\widehat\rho(u)$. This correctly defines $\kappa$ because $\varphi_1$ is onto $Q_1$, and if $\varphi_1(u)=\varphi_1(v)$ then $uN_1 = vN_1$, $\widehat\rho(u)N_2 = \widehat\rho(v)N_2$, and $\varphi_2\widehat\rho(u)=\varphi_2\widehat\rho(v)$. Moreover, $\kappa$ is a homomorphism (since $\varphi_1$, $\varphi_2$, $\widehat\rho$ are homomorphisms), it is onto $Q_2$ (since $\widehat\rho$, $\varphi_2$ are onto), and it is one-to-one (since $\kappa(\varphi_1(u))=1$ implies $\varphi_2\widehat\rho(u)=1$, $\widehat\rho(u)\in N_2$, $u\in N_1$, $\varphi_1(u)=1$).
\end{proof}

Note that $|\ker\varphi| = |F_p|/|Q| = p^6/p^3= p^3$ in Theorem \ref{Th:Translation}.

\section{The orbits}\label{Sc:Orbits}

By Theorem \ref{Th:Translation}, in order to classify the loops of $\mathcal A_p$ up to isomorphism, it suffices to describe the orbits of the action of $\gl_2(p)$ from Lemma \ref{Lm:GL} on $3$-dimensional subspaces of $Z(F_p)\cong (\mathbb Z_p)^4$.

From now on, we will write $Z(F_p) =  \subsp{x^p}\oplus \subsp{y^p}\oplus\subsp{(x,x,y)}\oplus \subsp{(x,y,y)}$ additively, and we will not distinguish between $\rho\in \gl_2(p)$ and the induced automorphism $\widehat{\rho}$ of $F_p$. Moreover, we will write $\rho(x,x,y)$ instead of the formally correct $\rho((x,x,y))$.

For the rest of this section, let $G=\gl_2(p)$, $V=\subsp{x^p}\oplus\subsp{y^p}$, and $W=\subsp{(x,x,y)}\oplus\subsp{(x,y,y)}$.

\begin{lemma}\label{Lm:Action}
The action of $G$ on $Z(F_p)$ from Lemma \ref{Lm:GL} is given by
\begin{align*}
    \rho(x^p) &=\left\{\begin{array}{ll}
        \alpha_1x^p + \alpha_2y^p,&\text{ if $p\ne 3$},\\
        \alpha_1x^p + \alpha_2y^p + \alpha_1^2\alpha_2(x,x,y) - \alpha_1\alpha_2^2(x,y,y),&\text{ if $p=3$},
    \end{array}\right.\\
    \rho(y^p) & = \left\{\begin{array}{ll}
        \beta_1x^p + \beta_2y^p,&\text{ if $p\ne 3$},\\
        \beta_1x^p + \beta_2y^p + \beta_1^2\beta_2(x,x,y) - \beta_1\beta_2^2(x,y,y),&\text{ if $p=3$},
    \end{array}\right.\\
    \rho(x,x,y) &= \alpha_1\det \rho\;(x,x,y) + \alpha_2\det \rho\;(x,y,y),\\
    \rho(x,y,y) &= \beta_1\det \rho\;(x,x,y) + \beta_2\det \rho\;(x,y,y),
\end{align*}
where $\rho = \binom{\alpha_1\ \alpha_2}{\beta_1\ \beta_2}\in\gl_2(p)$.

In particular, $W$ is always an invariant subspace, and $V$ is an invariant subspace if $p\ne 3$.
\end{lemma}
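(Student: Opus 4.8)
The plan is to exploit that, by Lemma \ref{Lm:GL}, the element $\rho=\binom{\alpha_1\ \alpha_2}{\beta_1\ \beta_2}$ acts on $F_p$ through the automorphism $\widehat\rho$ determined by $\widehat\rho(x)=(\alpha_1,\alpha_2,0,0,0,0)=x^{\alpha_1}y^{\alpha_2}$ and $\widehat\rho(y)=(\beta_1,\beta_2,0,0,0,0)=x^{\beta_1}y^{\beta_2}$. Since $Z(F_p)$ is generated, as in Lemma \ref{Lm:ZFp}, by $x^p$, $y^p$, $(x,x,y)$, $(x,y,y)$, and since $\widehat\rho$ is an automorphism, it suffices to evaluate $\rho(x^p)=\widehat\rho(x)^p$, $\rho(y^p)=\widehat\rho(y)^p$, $\rho(x,x,y)=(\widehat\rho(x),\widehat\rho(x),\widehat\rho(y))$ and $\rho(x,y,y)=(\widehat\rho(x),\widehat\rho(y),\widehat\rho(y))$ on the explicit sextuples above. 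Each of these reduces to a single application of a formula already proved.

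For the two $p$-th powers I would apply the power formula of Lemma \ref{Lm:FPowers} to the sextuple $(\alpha_1,\alpha_2,0,0,0,0)$. When $p\ne 3$ this gives $\widehat\rho(x)^p=(0,0,\alpha_1,\alpha_2,0,0)$, which in the additive notation of the basis $x^p=(0,0,1,0,0,0)$, $y^p=(0,0,0,1,0,0)$, $(x,x,y)=(0,0,0,0,1,0)$, $(x,y,y)=(0,0,0,0,0,1)$ reads $\alpha_1 x^p+\alpha_2 y^p$. When $p=3$ the same formula produces the additional fifth and sixth coordinates $\alpha_1^2\alpha_2$ and $-\alpha_1\alpha_2^2$, giving $\alpha_1 x^p+\alpha_2 y^p+\alpha_1^2\alpha_2(x,x,y)-\alpha_1\alpha_2^2(x,y,y)$. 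Replacing $\alpha$ by $\beta$ throughout yields $\rho(y^p)$, establishing the first two displayed lines.

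For the two associators I would invoke Lemma \ref{Lm:AClass2}(iv) with $a=x$, $b=y$, which is available in $F_p$ because $F_p$ is a commutative automorphic loop of nilpotency class two. Taking $(i_1,i_2)=(j_1,j_2)=(\alpha_1,\alpha_2)$ and $(k_1,k_2)=(\beta_1,\beta_2)$ computes $(\widehat\rho(x),\widehat\rho(x),\widehat\rho(y))$, where the common exponent $i_1k_2-i_2k_1$ equals $\alpha_1\beta_2-\alpha_2\beta_1=\det\rho$; this gives $(x,x,y)^{\alpha_1\det\rho}(x,y,y)^{\alpha_2\det\rho}$, which is the third line. Taking instead $(i_1,i_2)=(\alpha_1,\alpha_2)$ and $(j_1,j_2)=(k_1,k_2)=(\beta_1,\beta_2)$ computes $(\widehat\rho(x),\widehat\rho(y),\widehat\rho(y))$ and yields the fourth line in exactly the same way, again with $i_1k_2-i_2k_1=\det\rho$.

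Finally, the two \emph{in particular} claims are read off from the formulas: the third and fourth lines show that $\rho(x,x,y)$ and $\rho(x,y,y)$ lie in $W=\langle (x,x,y),(x,y,y)\rangle$, so $W$ is always invariant; and for $p\ne 3$ the first two lines show $\rho(x^p),\rho(y^p)\in V=\langle x^p,y^p\rangle$, so $V$ is invariant. There is no genuine obstacle here: the only point requiring care is the case $p=3$, where the nonvanishing value $t_3\equiv -1\pmod 3$ from Lemma \ref{Lm:FPowers} forces the extra $W$-components in $\rho(x^p)$ and $\rho(y^p)$, and this is precisely what prevents $V$ from being invariant.
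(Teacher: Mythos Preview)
Your proposal is correct and follows essentially the same approach as the paper's proof: both compute $\rho(x^p)$ and $\rho(y^p)$ by applying the $p$-th power formula of Lemma \ref{Lm:FPowers} to $(\alpha_1,\alpha_2,0,0,0,0)$ and $(\beta_1,\beta_2,0,0,0,0)$, and both compute the associators $\rho(x,x,y)$ and $\rho(x,y,y)$ via Lemma \ref{Lm:AClass2}(iv). Your explicit reading off of the invariance of $W$ and (for $p\ne 3$) $V$ from the formulas is exactly what the paper leaves implicit.
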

\begin{proof}
We have $\rho(x^p) = \rho(x)^p = (\alpha_1,\alpha_2,0,0,0,0)^p$ by definition. By Lemma \ref{Lm:FPowers}, $(\alpha_1,\alpha_2,0,0,0,0)^p$ equals $(0,0,\alpha_1,\alpha_2,0,0)$ when $p\ne 3$, and $(0,0,\alpha_1,\alpha_2,\alpha_1^2\alpha_2,-\alpha_1\alpha_2^2)$ when $p=3$. Similarly for $\rho(y^p)$. Calculating in $F_p$, we have
\begin{align*}
    \rho(x,x,y) &= (\rho(x),\rho(x),\rho(y)) = (x^{\alpha_1}y^{\alpha_2}, x^{\alpha_1}y^{\alpha_2}, x^{\beta_1}y^{\beta_2})\\
    &= (x,x,y)^{\alpha_1(\alpha_1\beta_2-\alpha_2\beta_1)}(x,x,y)^{\alpha_2(\alpha_1\beta_2-\alpha_2\beta_1)}
\end{align*}
by Lemma \ref{Lm:AClass2}(iv), and so $\rho(x,x,y) = \alpha_1\det\rho\;(x,x,y)+\alpha_2\det\rho\;(x,y,y)$ in the additive notation of $Z(F_p)$. Similarly for $\rho(x,y,y)$.
\end{proof}

Given a subspace $N$ of $Z(F_p)$, denote by $G(N)$ the orbit of $N$ under the action of $G$.

\begin{lemma}\label{Lm:Aux}
Let $N$ be a $3$-dimensional subspace of $Z(F_p)$, and assume that $p\ne 3$.
\begin{enumerate}
\item[(i)] If $W\not\subseteq N$ then $N\in G(\subsp{v_1,v_2+w_2,(x,x,y)})$ for some $v_1$, $v_2\in V$ and $w_2\in W$.
\item[(ii)] If $V\not\subseteq N$ then $N\in G(\subsp{x^p,v_2+w_2,w_3})$ for some $v_2\in V$ and $w_2$, $w_3\in W$.
\end{enumerate}
\end{lemma}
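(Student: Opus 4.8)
The plan is to exploit that, for $p\ne 3$, both $V$ and $W$ are $G$-invariant (Lemma \ref{Lm:Action}) and that $G$ acts transitively on the nonzero vectors of each of them. On $V$ the action of $\rho=\binom{\alpha_1\ \alpha_2}{\beta_1\ \beta_2}$ is by the matrix $\rho^{T}$ in the basis $x^p$, $y^p$, so it is the standard $\gl_2(p)$-action and is transitive on $V\setminus\{0\}$. On $W$ the action is by $\det\rho\cdot\rho^{T}$; the determinant twist disappears as soon as we restrict to $\rho\in\mathrm{SL}_2(p)$, and since $\mathrm{SL}_2(p)$ is already transitive on the nonzero vectors of a two-dimensional space, $G$ is transitive on $W\setminus\{0\}$ as well. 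This transitivity is the one point that needs a moment's thought; everything else is linear algebra inside $Z(F_p)=V\oplus W$.

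For (i), assume $W\not\subseteq N$. A dimension count gives $\dim(N\cap W)\ge\dim N+\dim W-\dim Z(F_p)=1$, and equality must hold, for $\dim(N\cap W)=2$ would force $W\subseteq N$. Write $N\cap W=\langle w\rangle$ with $w\ne 0$ and, using transitivity on $W\setminus\{0\}$, choose $\rho\in G$ with $\rho(w)=(x,x,y)$; since $W$ is invariant and $\rho$ is invertible, $\rho(N)\cap W=\langle(x,x,y)\rangle$, so it suffices to treat $\rho(N)$, which we again call $N$. Now the projection $\pi_V\colon Z(F_p)\to V$ restricted to $N$ has kernel $N\cap W=\langle(x,x,y)\rangle$, hence $\pi_V(N)=V$. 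Lift $x^p$, $y^p$ to $u_1$, $u_2\in N$ and subtract suitable multiples of $(x,x,y)\in N$ to arrange $u_1=x^p+t_1(x,y,y)$ and $u_2=y^p+t_2(x,y,y)$. If $t_1=0$ or $t_2=0$ one of $u_1$, $u_2$ already lies in $V$; otherwise $t_2u_1-t_1u_2=t_2x^p-t_1y^p\in V\setminus\{0\}$. In every case we extract a vector $v_1\in N\cap V$, and then $N=\langle v_1,\,v_2+w_2,\,(x,x,y)\rangle$, where $v_2+w_2$ is the remaining (mixed) generator, which is the asserted form.

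For (ii) the argument is the mirror image, with the roles of $V$ and $W$ interchanged. Assuming $V\not\subseteq N$, the same count gives $\dim(N\cap V)=1$; transitivity of $G$ on $V\setminus\{0\}$ lets us move $N\cap V$ onto $\langle x^p\rangle$, so that $x^p\in N$. Then $\pi_W$ restricted to $N$ has kernel $N\cap V=\langle x^p\rangle$ and is therefore onto $W$. Lifting a basis of $W$ to $N$ and clearing $x^p$-components as before yields generators whose $V$-parts are scalar multiples of $y^p$; a combination analogous to the one above produces a nonzero vector $w_3\in N\cap W$, giving $N=\langle x^p,\,v_2+w_2,\,w_3\rangle$.

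The only genuinely non-formal step is the transitivity on $W\setminus\{0\}$, because of the $\det\rho$ factor in the $W$-action; passing to $\mathrm{SL}_2(p)$ removes it. The remainder is the bookkeeping of choosing the lifts and, in the ``both $t_i\ne 0$'' situations, forming the single linear combination that isolates a generator lying entirely in $V$ (respectively in $W$); keeping track of which projection is used where is the main place to be careful.
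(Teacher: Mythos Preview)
Your argument is correct and follows the same line as the paper's proof: intersect $N$ with the invariant two-dimensional piece that is not contained in $N$, use transitivity of $G$ on its nonzero vectors to normalize that intersection, and then pick the remaining generators. Your justification of transitivity on $W\setminus\{0\}$ via $\mathrm{SL}_2(p)$ is a welcome addition that the paper leaves implicit. One minor simplification: to produce $v_1\in N\cap V$ in (i) (and $w_3\in N\cap W$ in (ii)) the paper simply uses the dimension bound $\dim(N\cap V)\ge\dim N+\dim V-4=1$, which spares you the projection/lifting computation; your route is correct but longer than necessary.
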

\begin{proof}
(i) Since $\dim N = 3$ and $\dim W=2$, there is $w\in W$ such that $N\cap W = \subsp{w}$. The subspace $W$ is invariant by Lemma \ref{Lm:Action}, so there is $\rho\in G$ such that $\rho(w) = (x,x,y)$. Since $\dim(N\cap V)\ge 1$ and $G(V)=V$, there is $v_1\in V\cap\rho(N)$. Any element of $Z(F_p)$ can be written as $v_2+w_2$ for some $v_2\in V$, $w_2\in W$. Part (ii) is similar.
\end{proof}

\begin{proposition}\label{Pr:n3}
Assume that $p\ne 3$. Let
\begin{align*}
    O_1 &= G(\subsp{x^p}\oplus W),\\
    O_2 &= G(V\oplus\subsp{(x,x,y)}),\\
    O_3 &= G(\subsp{x^p,y^p+(x,y,y),(x,x,y)}),\\
    O_4 &= G(\subsp{y^p,x^p+(x,y,y),(x,x,y)}),\\
    O_5 &= G(\subsp{y^p,\lambda x^p+(x,y,y),(x,x,y)}),
\end{align*}
where $\lambda$ is not a square in $\mathbb Z_p$. (When $p=2$, every element of $\mathbb Z_p$ is a square, and we let $O_5=\emptyset$.) Then $O_1\cup O_2\cup O_3\cup O_4\cup O_5$ is a disjoint union of all $3$-dimensional subspaces of $Z(F_p)$.
\end{proposition}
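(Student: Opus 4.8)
The plan is to exploit the two invariant subspaces $V$ and $W$ from Lemma \ref{Lm:Action} and to sort the three-dimensional subspaces $N$ according to how they meet $V$ and $W$. Since $\dim N=3$ and $\dim V=\dim W=2$, both $N\cap V$ and $N\cap W$ are at least one-dimensional, and $N\cap V$ (resp.\ $N\cap W$) is two-dimensional precisely when $V\subseteq N$ (resp.\ $W\subseteq N$); because $V+W=Z(F_p)$ has dimension $4$, these two containments cannot hold simultaneously. I would first dispose of the degenerate cases. If $W\subseteq N$ then $N=U\oplus W$ for a line $U\subseteq V$, and as $G$ acts on $V$ as $\gl_2(p)$ it is transitive on lines of $V$, so all such $N$ form the single orbit $O_1$. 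Dually, if $V\subseteq N$ then $N=V\oplus U'$ for a line $U'\subseteq W$, and since $G$ acts on $W$ by a nonzero scalar multiple of the natural action (Lemma \ref{Lm:Action}), it is again transitive on lines of $W$, giving the single orbit $O_2$. These two orbits are disjoint from each other and from everything that follows, because containing $V$ (resp.\ $W$) is a $G$-invariant condition.

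This leaves the generic case $V\not\subseteq N$, $W\not\subseteq N$, where $N\cap V$ and $N\cap W$ are exactly one-dimensional. Here I would pass to the dual picture, writing $N=\ker\phi$ for a functional $\phi=(\phi_V,\phi_W)\in V^*\oplus W^*$, unique up to a scalar; the conditions $\dim(N\cap V)=\dim(N\cap W)=1$ are then equivalent to $\phi_V\neq 0$ and $\phi_W\neq 0$. Encoding $\phi_V$, $\phi_W$ as the rows of a $2\times 2$ matrix $M$ over $\mathbb Z_p$ and reading off the dual action from Lemma \ref{Lm:Action}, the determinant twist on $W$ translates into the transformation law $M\mapsto \mathrm{diag}(1,\det B)\,M\,B$ for $B\in\gl_2(p)$, taken modulo the residual overall scalar. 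Since both rows of $M$ are nonzero, $\mathrm{rank}\,M\in\{1,2\}$.

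Two invariants now fall out. First, $\mathrm{rank}\,M$ is preserved, as left multiplication by $\mathrm{diag}(1,\det B)$, right multiplication by $B$, and scaling are all invertible. Second, when $\mathrm{rank}\,M=2$, the transformation multiplies $\det M$ by $(\det B)^2$ and by the square of the scalar, so the class of $\det M$ in $\mathbb Z_p^\times/(\mathbb Z_p^\times)^2$ is invariant. I expect completeness of these invariants to be the crux: I would prove it by a normal-form reduction, right-multiplying an invertible $M$ by $B=M^{-1}$ to reach $\mathrm{diag}(1,d)$ with $d=(\det M)^{-1}$, and then using scalar matrices $B=\mathrm{diag}(s,s)$ together with the overall scalar to replace $d$ by $s^2d$, so that the orbit depends only on the square class of $d$. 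Hence the rank-one case is a single orbit, whereas the rank-two case splits into one orbit for $p=2$ and two orbits for odd $p$, according to whether $\det M$ is or is not a square.

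It remains to match these orbits with the stated representatives by computing the functional of each. One finds that $O_3$ corresponds to $\mathrm{rank}\,M=1$ (its representative makes $\phi_V$, $\phi_W$ proportional, so $\det M=0$), while $O_4$ and $O_5$ correspond to $\mathrm{rank}\,M=2$ with $\det M$ equal to $-1$ and $-\lambda$ respectively; since $\lambda$ is a nonsquare these lie in distinct classes, and for $p=2$ the nonsquare class is empty, forcing $O_5=\emptyset$. Assembling the three cases shows that $O_1,\dots,O_5$ are pairwise disjoint and exhaust all three-dimensional subspaces. The main obstacle throughout is the determinant twist linking $V$ and $W$: it is precisely this twist that produces the square/nonsquare dichotomy in the rank-two case, and hence the splitting into $O_4$ and $O_5$, and it must be tracked carefully when proving completeness of the invariants. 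One could alternatively run the generic case entirely inside $F_p$ using the normal forms supplied by Lemma \ref{Lm:Aux}, but the dual matrix picture makes the invariants most transparent.
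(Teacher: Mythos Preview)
Your proof is correct and takes a genuinely different route from the paper's. The paper stays on the primal side throughout: after handling the cases $W\subseteq N$ and $V\subseteq N$ as you do, it uses Lemma \ref{Lm:Aux} to force a basis of the shape $\subsp{v_1,\,v_2+\gamma(x,y,y),\,(x,x,y)}$ with $\subsp{v_1,v_2}=V$ and $\gamma\ne 0$, then splits on whether the $y^p$-coefficient of $v_1$ vanishes, and finally applies explicit diagonal and lower-triangular $\rho$ to reduce $\gamma$ to $1$ or to a fixed nonsquare; disjointness of $O_3$, $O_4$, $O_5$ is checked by ad hoc element chases. Your dual approach replaces this case analysis by two clean invariants, $\mathrm{rank}\,M$ and the square class of $\det M$, whose invariance and completeness follow from the single transformation law $M\mapsto \mathrm{diag}(1,\det B)\,M\,B$ (modulo scalar) that you correctly derive from Lemma \ref{Lm:Action}; matching the representatives then amounts to reading off $M$ for each. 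The payoff of your method is conceptual clarity: disjointness and exhaustiveness come for free once the invariants are identified, and the square/nonsquare split in $O_4$ versus $O_5$ is explained rather than verified. The paper's approach, by contrast, needs no passage to the dual and no bookkeeping about how the $\det\rho$-twist interacts with dualization, so it is more self-contained for a reader not fluent with dual actions.
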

\begin{proof}
Let $N$ be a $3$-dimensional subspace of $Z(F_p)$. Throughout the proof, assume that all elements $v_i$ belong to $V$, and all elements $w_i$ belong to $W$.

First suppose that $W\subseteq N$. Then $V\not\subseteq N$ (else $Z(F_p)= V\oplus W \subseteq N$, a contradiction), so Lemma \ref{Lm:Aux} implies that $N \in G(\subsp{x^p,v_2+w_2,w_3})$. Since $G(W)=W\subseteq N$, we have $W\subseteq\subsp{x^p,v_2+w_2,w_3}$, so $\subsp{x^p,v_2+w_2,w_3} = \subsp{x^p,v_2}\oplus W$. Thus $v_2\in\subsp{x^p}$ and $N\in O_1$.

Now suppose that $V\subseteq N$. Then $W\not\subseteq N$ and Lemma \ref{Lm:Aux} implies that $N \in G(\subsp{v_1,v_2+w_2,(x,x,y)})$. Since $V\subseteq N$ and $G(V)=V$, we deduce $N\in G(V\oplus\subsp{(x,x,y)}) = O_2$. Conversely, any element of $O_2$ contains $V$.

Finally suppose that $V\not\subseteq N$ and $W\not\subseteq N$. By Lemma \ref{Lm:Aux}, $N\in G(\subsp{v_1,v_2+w_2,(x,x,y)})$. Note that $\subsp{v_1,v_2}=V$, else $\subsp{v_1,v_2+w_2,(x,x,y)} = \subsp{v_1,w_2,(x,x,y)}$, and either this subspace contains $W$ (when $w_2\not\in\subsp{(x,x,y)}$), a contradiction, or it has dimension $2$ (when $w_2\in\subsp{(x,x,y)}$), a contradiction again. Also,
$\subsp{w_2,(x,x,y)}=W$, else $\subsp{v_1,v_2+w_2,(x,x,y)} = \subsp{v_1,v_2,(x,x,y)}$, $V\subseteq N$, a contradiction. We can therefore assume that $N\in G(\subsp{v_1,v_2+\gamma(x,y,y),(x,x,y)})$, where $\gamma\ne 0$, $\subsp{v_1,v_2}=V$ and $v_1 = \tau_1x^p+\tau_2y^p$ for some $\tau_1$, $\tau_2$.

Suppose first that $\tau_2=0$. Then $N\in G(\subsp{x^p,y^p+\gamma(x,y,y),(x,x,y)})$ for some $\gamma\ne 0$. Consider $\rho = \binom{\alpha_1\ 0}{0\ \beta_2}\in G$, where $\beta_2 = (\alpha_1\gamma)^{-1}$. Then $\rho(x^p)=\alpha_1x^p$, $\rho(y^p) = \beta_2y^p$, $\rho(x,x,y) = \alpha_1^2\beta_2(x,x,y)$ and $\rho(y^p+\gamma(x,x,y)) = \beta_2y^p + \gamma\alpha_1\beta_2^2(x,y,y) = \beta_2(y^p+(x,y,y))$. Hence $N\in O_3$.

Now suppose that $\tau_2\ne 0$. Consider $\rho=\binom{\alpha_1\ 0}{\beta_1\ \beta_2}\in G$, where $\beta_1 = -\tau_1\alpha_1/\tau_2$. Then $\rho(x,x,y) = \alpha_1^2\beta_2(x,x,y)$, $\rho(x,y,y) = \beta_1\alpha_1\beta_2(x,x,y) + \alpha_1\beta_2^2(x,x,y)$, and $\rho(v_1) = \tau_1\alpha_1x^p + \tau_2(\beta_1x^p+\beta_2y^p) = \tau_2\beta_2y^p$. Hence we have $N\in G(\subsp{y^p,x^p+\gamma(x,y,y),(x,x,y)})$ for some $\gamma\ne 0$. Consider again $\rho = \binom{\alpha_1\ 0}{0\ \beta_2}\in G$. Then $\rho(\subsp{y^p,x^p+\gamma(x,y,y),(x,x,y)}) = \subsp{y^p,\alpha_1x^p+\gamma\alpha_1\beta_2^2(x,y,y),(x,x,y)} = \subsp{y^p,x^p+\gamma\beta_2^2(x,y,y),(x,x,y)}$, and we conclude that either $N\in O_4$ or $N\in O_5$, depending on whether $\gamma$ is a square in $\mathbb Z_p$.

It remains to show that $O_1$, $\dots$, $O_5$ are pairwise disjoint. If $N\in O_1$, we have $W\subseteq N$, and thus $O_1\cap O_i=\emptyset$ for $i>1$. If $N\in O_2$, we have $V\subseteq N$, and thus $O_2\cap O_i=\emptyset$ for $i>2$.

Suppose that $N\in O_3\cap O_4$. Then there is $\rho=\binom{\alpha_1\ \alpha_2}{\beta_1\ \beta_2}\in G$ such that, without loss of generality,  $N = \subsp{y^p,x^p+(x,y,y),(x,x,y)} = \rho(\subsp{x^p,y^p+(x,y,y),(x,x,y)}) = \subsp{\alpha_1x^p + \alpha_2y^p, \beta_1x^p+\beta_2y^p + w_1,w_2}$. Since $y^p\in N$, we conclude that $\alpha_1x^p\in N$. Should $\alpha_1\ne 0$, we would have $V\subseteq N$, a contradiction. Hence $\alpha_1=0$ and $\rho(x,x,y) = \alpha_2\det\rho\;(x,y,y)\in N$. As $\alpha_2\ne 0$, we see that $(x,y,y)\in N$, $x^p\in N$, $V\subseteq N$, a contradiction again. Therefore $O_3\cap O_4=\emptyset$. Similarly, $O_3\cap O_5=\emptyset$.

Suppose that $N\in O_4\cap O_5$. We can assume that $N=\subsp{y^p,\lambda x^p+(x,y,y),(x,x,y)} = \rho(\subsp{y^p,x^p+(x,y,y),(x,x,y)}) = \subsp{\beta_1x^p+\beta_2y^p,v_1+w_1,w_2}$, where $\lambda$ is not a square. If $\beta_1\ne 0$, we conclude that $x^p\in N$, $V\subseteq N$, a contradiction. Thus $\beta_1=0$, $\det\rho = \alpha_1\beta_2$, and $\alpha_1x^p + \alpha_2y^p + \alpha_1\beta_2^2(x,y,y)\in N$, $\alpha_1x^p+\alpha_1\beta_2^2(x,y,y)\in N$, $\lambda x^p + \lambda\beta_2^2(x,y,y)\in N$, and since also $\lambda x^p+(x,y,y)\in N$, we have $(\lambda\beta_2^2-1)(x,y,y)\in N$. As $\lambda$ is not a square, $\lambda\beta_2^2$ is not a square, but $1$ is a square, thus $\lambda\beta_2^2-1\ne 0$, $(x,y,y)\in N$, $W\subseteq N$, a contradiction.
\end{proof}

\begin{proposition}\label{Pr:3}
Suppose that $p=3$, and let
\begin{align*}
    O_1 &= G(\subsp{x^p}\oplus W),\\
    O_2 &= G(V\oplus\subsp{(x,x,y)}),\\
    O_3 &= G(\subsp{x^p+(x,y,y),y^p,(x,x,y)}),\\
    O_4 &= G(\subsp{x^p-(x,y,y),y^p,(x,x,y)}),\\
    O_5 &= G(\subsp{x^p-(x,y,y),y^p+(x,y,y),(x,x,y)}).
\end{align*}
Then $O_1\cup O_2\cup O_3\cup O_4\cup O_5$ is a disjoint union of all $3$-dimensional subspaces of $Z(F_p)$.
\end{proposition}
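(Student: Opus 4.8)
The plan is to organize the classification by the intersection $N\cap W$ rather than by which of $V$, $W$ is contained in $N$, since for $p=3$ the subspace $V$ is no longer $G$-invariant (Lemma \ref{Lm:Action}) while $W$ still is. For any $3$-dimensional subspace $N\le Z(F_p)$ one has $1\le\dim(N\cap W)\le 2$, and I would split into these two cases. If $\dim(N\cap W)=2$, that is $W\subseteq N$, then $N=\subsp{v}\oplus W$ for a nonzero $v\in V$. Since $W$ is invariant, $G$ acts on $Z(F_p)/W\cong V$, and by Lemma \ref{Lm:Action} this induced action is the standard action of $\gl_2(p)$ on $(\mathbb Z_p)^2$, transitive on lines. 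Hence $N$ is carried to $\subsp{x^p}\oplus W$, so $N\in O_1$; conversely every member of $O_1$ contains $W$. This disposes of $O_1$.

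If $\dim(N\cap W)=1$, write $N\cap W=\subsp{w}$. First I would use that $G$ is transitive on the lines of $W$ — immediate from Lemma \ref{Lm:Action}, since $\rho$ sends $\subsp{(x,x,y)}$ to $\subsp{\alpha_1(x,x,y)+\alpha_2(x,y,y)}$ and the first row of $\rho$ is an arbitrary nonzero vector — to reduce to $N\cap W=\subsp{(x,x,y)}$. As $N$ then surjects onto $Z(F_p)/W$, it has a unique normal form
\begin{displaymath}
    N=\subsp{x^p+a(x,y,y),\ y^p+b(x,y,y),\ (x,x,y)},\qquad a,b\in\mathbb Z_3.
\end{displaymath}
The nine such subspaces are permuted by the stabilizer of $\subsp{(x,x,y)}$, which by Lemma \ref{Lm:Action} is the lower-triangular (Borel) subgroup $B=\{\rho:\alpha_2=0\}$; the standard correspondence between $G$-orbits on the subspaces with $\dim(N\cap W)=1$ and $B$-orbits on these nine subspaces then reduces everything to an explicit computation.

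The heart of the argument, and the step where $p=3$ genuinely differs from $p\ne 3$, is evaluating the $B$-action on the pair $(a,b)$. Using Lemma \ref{Lm:Action} for $\rho=\binom{\alpha_1\ 0}{\beta_1\ \beta_2}$ and reducing modulo the invariant line $\subsp{(x,x,y)}$, I expect to obtain
\begin{displaymath}
    (a,b)\longmapsto\bigl(\beta_2^2\,a,\ \alpha_1\beta_2\,b-\beta_1\beta_2(1+a)\bigr).
\end{displaymath}
The crucial point is that $\beta_2^2=1$ for every nonzero $\beta_2\in\mathbb Z_3$, so the first coordinate $a$ is a $B$-invariant; this is exactly the arithmetic accident ($x\mapsto x^2$ trivial on $\mathbb Z_3^\ast$) responsible for the mixing from $V$ into $W$ in the $p=3$ rows of Lemma \ref{Lm:Action}, and it is what makes the orbit structure differ from Proposition \ref{Pr:n3}. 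With $a$ fixed, $b\mapsto\alpha_1\beta_2\,b-\beta_1\beta_2(1+a)$: when $1+a\ne 0$ (i.e. $a\in\{0,1\}$) the term $\beta_1\beta_2(1+a)$ ranges over all of $\mathbb Z_3$ and $b$ forms a single orbit, whereas for $a=-1$ one gets $b\mapsto\alpha_1\beta_2\,b$, splitting $\mathbb Z_3$ into $\{0\}$ and $\{1,2\}$. This yields exactly four $B$-orbits, with representatives $(0,0)$, $(1,0)$, $(-1,0)$, $(-1,1)$, matching $O_2$, $O_3$, $O_4$, $O_5$ respectively.

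Finally I would assemble disjointness and exhaustiveness. Disjointness of $O_1$ from the rest is clear, as only members of $O_1$ contain $W$; disjointness of $O_2,\dots,O_5$ follows because distinct $B$-orbits on the normalized subspaces give distinct $G$-orbits. As a numerical sanity check the orbit sizes come out to $4$, $12$, $12$, $4$, $8$ (each nontrivial $B$-orbit replicated over the four lines of $W$), summing to $(3^4-1)/(3-1)=40$, the number of $3$-dimensional subspaces of $(\mathbb Z_3)^4$. The main obstacle is thus not a single hard estimate but the bookkeeping: correctly reducing to the Borel action after fixing $N\cap W$, and carrying out the $(a,b)$ computation so that the $\beta_2^2=1$ phenomenon is visible and the split at $a=-1$ is not overlooked.
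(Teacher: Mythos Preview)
Your argument is correct and complete. The paper itself does not supply a proof of this proposition; it merely points the reader to an argument ``similar to the proof of Proposition \ref{Pr:n3}'' or to a direct \texttt{GAP} computation. Your organization is in fact cleaner than a straight adaptation of Proposition \ref{Pr:n3}: since $V$ is not $G$-invariant when $p=3$, the case split ``$V\subseteq N$ versus $V\not\subseteq N$'' used there loses its force, whereas your split on $\dim(N\cap W)\in\{1,2\}$ uses only the invariance of $W$ and reduces the nontrivial case to a transparent Borel action on the nine normal forms $(a,b)$. The transformation law $(a,b)\mapsto(\beta_2^2\,a,\ \alpha_1\beta_2\,b-\beta_1\beta_2(1+a))$ is correct (the extra $-\beta_1\beta_2$ in the second coordinate is exactly the contribution of the $-\beta_1\beta_2^2(x,y,y)$ term in $\rho(y^p)$ from Lemma \ref{Lm:Action}), and the resulting $B$-orbits of sizes $3,3,1,2$ on the fibre match $O_2,\dots,O_5$ as stated.

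One remark on your numerical check: your sizes $(|O_1|,\dots,|O_5|)=(4,12,12,4,8)$ are correct, and they disagree with the order $(12,4,12,4,8)$ quoted in the paper. Indeed $|O_1|=4$ because the $3$-dimensional subspaces containing $W$ are exactly the four preimages of lines in $Z(F_p)/W$, on which $G$ acts (through the standard $\gl_2(3)$ action, the $p=3$ correction terms lying in $W$) transitively; and $|O_2|=12$ since $\mathrm{Stab}_G(V\oplus\subsp{(x,x,y)})$ consists only of the four diagonal matrices, as your $B$-orbit of size $3$ already shows. So the paper's parenthetical list has $|O_1|$ and $|O_2|$ transposed.
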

\begin{proof}
We leave the proof to the reader, who will need Lemma \ref{Lm:Action} and an argument similar to the proof of Proposition \ref{Pr:n3}. Alternatively, the proof can be accomplished by a direct computer calculation, for instance in \texttt{GAP}. (The calculation will in addition show that the cardinalities of $O_1$, $\dots$, $O_5$ are $12$, $4$, $12$, $4$ and $8$, respectively, for the correct total of $40 = (3^4-1)(3^4-3)(3^4-3^2)/((3^3-1)(3^3-3)(3^3-3^2))$ subspaces.)
\end{proof}

\section{Main result}\label{Sc:Main}

\begin{theorem}\label{Th:Class2}
Let $p$ be a prime, and let $\mathcal A_p$ be the class of all $2$-generated commutative automorphic loops $Q$ possessing a central subloop $Z\cong \mathbb Z_p$ such that $Q/Z\cong\mathbb Z_p\times\mathbb Z_p$. If $p=2$, let $O_1$, $\dots$, $O_4$ be as in Proposition \ref{Pr:n3}. If $p=3$, let $O_1$, $\dots$, $O_5$ be as in Proposition \ref{Pr:3}. If $p>3$, let $O_1$, $\dots$, $O_5$ be as in Proposition \ref{Pr:n3}. For every $i$, let $N_i\in O_i$ and $Q_i=F_p/N_i$. Then $Q\in\mathcal A_p$ is isomorphic to precisely one $Q_i$. Moreover, $Q_i$ is a group if and only if $i=1$.
\end{theorem}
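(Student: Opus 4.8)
The plan is to recast the classification entirely in the orbit language of Theorem~\ref{Th:Translation}, and then read off both assertions from the orbit descriptions in Propositions~\ref{Pr:n3} and~\ref{Pr:3}. Given $Q\in\mathcal A_p$, Theorem~\ref{Th:Translation}(i) supplies an epimorphism $\varphi:F_p\to Q$ with $N:=\ker\varphi\le Z(F_p)$, and the remark following that theorem gives $|N|=p^3$. Since $Z(F_p)\cong(\mathbb Z_p)^4$ is elementary abelian, every subgroup is an $\mathbb F_p$-subspace, so $N$ is $3$-dimensional; hence by Proposition~\ref{Pr:n3} (if $p\ne 3$) or Proposition~\ref{Pr:3} (if $p=3$) it lies in exactly one orbit $O_i$. (When $p=2$ one has $O_5=\emptyset$, and the argument proceeds with $i\in\{1,2,3,4\}$.)

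Before applying Theorem~\ref{Th:Translation}(ii) I first check that each $Q_i=F_p/N_i$ genuinely belongs to $\mathcal A_p$, since that theorem is stated only for loops in $\mathcal A_p$. As a quotient of $F_p$, each $Q_i$ is a $2$-generated commutative automorphic loop; because $N_i\le Z(F_p)$, the image $Z:=Z(F_p)/N_i$ is central in $Q_i$ with $|Z|=p^4/p^3=p$, so $Z\cong\mathbb Z_p$, and $Q_i/Z\cong F_p/Z(F_p)\cong\mathbb Z_p\times\mathbb Z_p$. Thus $Q_i\in\mathcal A_p$, with the canonical projection $F_p\to Q_i$ having kernel $N_i\le Z(F_p)$. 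Existence now follows: as $N$ and $N_i$ lie in the same orbit there is $\rho\in\gl_2(p)$ with $N_i=\widehat\rho(N)$, whence $Q\cong Q_i$ by Theorem~\ref{Th:Translation}(ii). Uniqueness follows from the same theorem together with the disjointness in Propositions~\ref{Pr:n3} and~\ref{Pr:3}: if $Q\cong Q_i$ and $Q\cong Q_j$ then $N_i,N_j$ both share the orbit of $N$, forcing $i=j$. In particular $Q_1,Q_2,\dots$ are pairwise nonisomorphic and, up to isomorphism, independent of the chosen representatives $N_i\in O_i$.

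For the final clause I will use that $Q=F_p/N$ is a group if and only if $A(F_p)\subseteq N$. Indeed, every associator of $Q$ has the form $(a,b,c)N$, so $Q$ is associative precisely when all associators of $F_p$ lie in $N$; since these generate $A(F_p)=W=\subsp{(x,x,y)}\oplus\subsp{(x,y,y)}$ by Lemma~\ref{Lm:ZFp}, this amounts to $W\subseteq N$. By Lemma~\ref{Lm:Action} the subspace $W$ is $G$-invariant, so the condition $W\subseteq N$ is constant along each orbit, and it suffices to test the representatives. The representative of $O_1$ is $\subsp{x^p}\oplus W\supseteq W$, so $Q_1$ is a group. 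For $i\ge 2$ the displayed representatives all omit $(x,y,y)$: in each case adjoining $(x,y,y)$ to the given spanning set would liberate a further independent vector among $x^p,y^p,(x,y,y)$, contradicting $\dim N_i=3$. Hence $W\not\subseteq N_i$ for $i\ge 2$, and $Q_i$ is a group if and only if $i=1$.

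The calculations here are light: dimension bookkeeping and the elementary checks that the representatives for $i\ge 2$ miss $W$. The only point demanding slight care is verifying $Q_i\in\mathcal A_p$, since this is exactly what licenses the use of Theorem~\ref{Th:Translation}(ii) to conclude both the well-definedness of the list and its pairwise nonisomorphism.
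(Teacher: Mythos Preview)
Your proof is correct and follows the same approach as the paper, which simply says ``Combine Theorem~\ref{Th:Translation} and Propositions~\ref{Pr:n3},~\ref{Pr:3}'' and then handles the group characterization via $A(F_p)=W$ and inspection of the orbit representatives. You supply the details the paper leaves implicit---in particular the verification that each $Q_i$ actually lies in $\mathcal A_p$ (needed to invoke Theorem~\ref{Th:Translation}(ii)) and the observation that $G$-invariance of $W$ makes the condition $W\subseteq N$ constant on orbits---but the route is identical.
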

\begin{proof}
Combine Theorem \ref{Th:Translation} and Propositions \ref{Pr:n3}, \ref{Pr:3}. It remains to show that $Q_i$ is a group if and only if $i=1$. Now, $Q_i=F_p/N_i$ is a group if and only if $A(F_p)\le N_i$. By Lemma \ref{Lm:ZFp}, $A(F_p)=W$. A quick inspection of the orbits shows that only $N_1$ contains $W$.
\end{proof}

We conclude the paper with a classification of commmutative automorphic loops of order $p^3$.

\begin{lemma}\label{Lm:InAp}
Let $p$ be a prime and let $Q$ be a nilpotent commutative automorphic loop of order $p^3$. If $Q$ is not a group then $Q\in\mathcal A_p$.
\end{lemma}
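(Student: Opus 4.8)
The plan is to show that a non-group nilpotent commutative automorphic loop $Q$ of order $p^3$ must be $2$-generated and must contain a central subloop $Z\cong\mathbb Z_p$ with $Q/Z\cong\mathbb Z_p\times\mathbb Z_p$. First I would use nilpotency to control the center. Since $Q$ is a nilpotent loop of prime-power order $p^3$, its center $Z(Q)$ is nontrivial, so $|Z(Q)|\in\{p,p^2,p^3\}$. If $|Z(Q)|=p^3$ then $Q=Z(Q)$ is an abelian group, contradicting that $Q$ is not a group. If $|Z(Q)|=p^2$ then $Q/Z(Q)$ has order $p$, hence is cyclic, and a standard loop-theoretic fact (the analogue of the group statement that $Q/Z(Q)$ cyclic forces $Q$ abelian) forces $Q$ to be an abelian group, again a contradiction. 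Therefore $|Z(Q)|=p$, and I would set $Z=Z(Q)\cong\mathbb Z_p$ as the required central subloop.

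Next I would identify the quotient $Q/Z$. It has order $p^2$, and I claim it is $\mathbb Z_p\times\mathbb Z_p$. Since $Q$ is commutative automorphic, so is the quotient $Q/Z$, and by the cited result (\cite{JKV2}) every commutative automorphic loop of order $p^2$ is an abelian group, so $Q/Z$ is an abelian group of order $p^2$, i.e.\ either $\mathbb Z_p\times\mathbb Z_p$ or $\mathbb Z_{p^2}$. The case $Q/Z\cong\mathbb Z_{p^2}$ must be excluded: if $Q/Z$ were cyclic, then $Q/Z(Q)$ would be cyclic and, by the same nilpotence argument as above, $Q$ would be an abelian group, contradicting the hypothesis. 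Hence $Q/Z\cong\mathbb Z_p\times\mathbb Z_p$.

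Finally I would verify that $Q$ is $2$-generated. Pick $a$, $b\in Q$ whose images generate $Q/Z\cong\mathbb Z_p\times\mathbb Z_p$; then $\langle a,b\rangle$ maps onto $Q/Z$, so $\langle a,b\rangle\cdot Z=Q$ and the index $[Q:\langle a,b\rangle]$ divides $|Z|=p$. If the index were $p$ we would have $\langle a,b\rangle$ of order $p^2$ mapping isomorphically onto $Q/Z$, forcing $Z\not\subseteq\langle a,b\rangle$; but then $\langle a,b\rangle$ would be a complement to $Z$, making $Q$ a central extension that splits as $\langle a,b\rangle\times Z$, an abelian group, a contradiction. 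Hence $[Q:\langle a,b\rangle]=1$ and $Q=\langle a,b\rangle$ is $2$-generated. Combining the three steps, $Q$ is a $2$-generated commutative automorphic loop with a central $\mathbb Z_p$ whose quotient is $\mathbb Z_p\times\mathbb Z_p$, i.e.\ $Q\in\mathcal A_p$.

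The main obstacle I anticipate is making the nilpotence argument ``$Q/Z(Q)$ cyclic implies $Q$ abelian'' rigorous in the loop setting, since the familiar group proof uses commutator identities that need the automorphic and commutative hypotheses to transfer. I would handle this through the central-extension description \eqref{Eq:CentralExtension}: if $Q/Z(Q)$ is cyclic generated by the class of some $g$, then every element of $Q$ is of the form $g^k z$ with $z\in Z(Q)$, and a direct computation with the cocycle shows such elements associate and commute, so $Q$ is an abelian group. The splitting argument in the last step requires similar care, but reduces to the same central-extension computation.
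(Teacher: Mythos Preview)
Your argument is correct, and for the most part parallels the paper's proof: both use nontriviality of the center to get a central $Z\cong\mathbb Z_p$, both invoke \cite{JKV2} to see that $Q/Z$ is an abelian group of order $p^2$, and both rule out $Q/Z$ cyclic by the power-associativity argument you sketch at the end (the paper phrases it in one line: ``Since $Q$ is power-associative, $Q/Z$ cannot be cyclic, else $Q$ is associative''). Your extra work pinning down $|Z(Q)|=p$ is not needed---the paper simply picks any order-$p$ subgroup of $Z(Q)$---but it does no harm.

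The genuine divergence is in the $2$-generated step. The paper argues indirectly: if $Q$ were not $2$-generated, every two elements would generate a proper subloop (hence a group of order at most $p^2$), so $Q$ would be diassociative; then Osborn's theorem makes $Q$ commutative Moufang, and Bruck's result that commutative Moufang loops of order $p^3$ are associative gives the contradiction. Your route is more elementary and self-contained: you lift generators of $Q/Z$, and if $\langle a,b\rangle$ were proper it would be a complement to the central $Z$, forcing $Q\cong\langle a,b\rangle\times Z\cong(\mathbb Z_p)^3$. This avoids the external theorems of Osborn and Bruck entirely, at the cost of a short splitting computation; the paper's version, on the other hand, is a one-line appeal to structural results already in the literature.
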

\begin{proof}
The center of $Q$ is a nontrivial normal associative subloop of $Q$, hence of order dividing $p^3$. Let $Z$ be a central subloop of $Q$ of order $p$, so $Z\cong\mathbb Z_p$. Then $Q/Z$ is a commutative automorphic loop of order $p^2$, necessarily a group by \cite{JKV2}. Since $Q$ is power-associative, $Q/Z$ cannot be cyclic, else $Q$ is associative. Hence $Q/Z\cong \mathbb Z_p\times\mathbb Z_p$.

Assume for a while that $Q$ is not $2$-generated. Then any two elements of $Q$ generate a proper subloop of $Q$, hence a group of order at most $p^2$. Thus $Q$ is diassociative. By the already-mentioned result of Osborn \cite{Osborn}, $Q$ is a commutative Moufang loop. But commutative Moufang loops of order $p^3$ are associative by \cite{Bruck}, a contradiction. Hence $Q$ is $2$-generated.
\end{proof}

The only commutative automorphic loop of order $8$ with trivial center has been described in \cite[Section 3]{JKV2}. Its multiplication table is
\begin{equation}\label{Eq:8}
\begin{array}{c|cccccccc}
    &1&2&3&4&5&6&7&8\\
    \hline
    1&1&2&3&4&5&6&7&8\\
    2&2&1&4&3&6&5&8&7\\
    3&3&4&1&2&7&8&5&6\\
    4&4&3&2&1&8&7&6&5\\
    5&5&6&7&8&1&4&2&3\\
    6&6&5&8&7&4&1&3&2\\
    7&7&8&5&6&2&3&1&4\\
    8&8&7&6&5&3&2&4&1
\end{array}.
\end{equation}

\begin{theorem}[Commutative automorphic loops of order $p^3$]\label{Th:p3}
Let $p$ be a prime. If $p=2$, let $O_2$, $\dots$, $O_4$ be as in Proposition \ref{Pr:n3}. If $p=3$, let $O_2$, $\dots$, $O_5$ be as in Proposition \ref{Pr:3}. If $p>3$, let $O_2$, $\dots$, $O_5$ be as in Proposition \ref{Pr:n3}. For every $i$, let $N_i\in O_i$ and $Q_i=F_p/N_i$.

There are precisely $7$ commutative automorphic loops of order $p^3$ up to isomorphism, including the three abelian groups $(\mathbb Z_p)^3$, $\mathbb Z_p\times\mathbb Z_{p^2}$, $\mathbb Z_{p^3}$. If $p$ is odd, the nonassociative commutative automorphic loops of order $p^3$ are precisely the loops $Q_2$, $\dots$, $Q_5$. If $p=2$, the nonassociative commutative automorphic loops of order $p^3$ are precisely the loops $Q_2$, $\dots$, $Q_4$, and the loop with multiplication table \eqref{Eq:8}.
\end{theorem}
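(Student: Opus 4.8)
The plan is to partition the commutative automorphic loops of order $p^3$ into the associative ones and the nonassociative ones, and then to analyze the nonassociative ones through the nilpotent/non-nilpotent dichotomy, invoking Theorem \ref{Th:Class2} for the nilpotent case.

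First I would dispose of the associative loops. A group that is a commutative automorphic loop is simply an abelian group, and the abelian groups of order $p^3$ are exactly $(\mathbb Z_p)^3$, $\mathbb Z_p\times\mathbb Z_{p^2}$ and $\mathbb Z_{p^3}$, which are pairwise nonisomorphic. As recorded before Theorem \ref{Th:Translation}, among these only $\mathbb Z_p\times\mathbb Z_{p^2}$ lies in $\mathcal A_p$, and by Theorem \ref{Th:Class2} it is realized as $Q_1$ (the unique $Q_i$ that is a group). These three groups thus contribute exactly three isomorphism types, one of which coincides with $Q_1$.

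Next I would treat the nonassociative loops. Let $Q$ be a nonassociative commutative automorphic loop of order $p^3$. If $Q$ is nilpotent, then Lemma \ref{Lm:InAp} places $Q$ in $\mathcal A_p$, whence Theorem \ref{Th:Class2} shows $Q\cong Q_i$ for precisely one $i$, with $i\ne 1$ since $Q$ is not a group. Conversely every $Q_i$ with $i\ge 2$ is a nonassociative member of $\mathcal A_p$, and these are pairwise nonisomorphic by Theorem \ref{Th:Class2}. Thus the nilpotent nonassociative loops are $Q_2,\dots,Q_5$ when $p$ is odd (using Proposition \ref{Pr:3} for $p=3$ and Proposition \ref{Pr:n3} for $p>3$) and $Q_2,Q_3,Q_4$ when $p=2$ (Proposition \ref{Pr:n3} with $O_5=\emptyset$).

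It then remains to account for non-nilpotent $Q$. Since $Q$ has order $p^3$, a power of $p$, it is a $p$-loop; when $p$ is odd such a loop is automatically nilpotent by \cite{JKV3}, so no non-nilpotent $Q$ exists and the nonassociative loops are exactly $Q_2,\dots,Q_5$, giving $3+4=7$ together with the three groups. When $p=2$ we have $p^3=8$, and there is a unique non-nilpotent commutative automorphic loop of order $8$, namely the loop \eqref{Eq:8} (see \cite[Section 3]{JKV2}); it has trivial center, so it lies outside $\mathcal A_2$, and being non-nilpotent it is isomorphic neither to any abelian group nor to any of the class-two loops $Q_2,Q_3,Q_4$. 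Hence for $p=2$ the nonassociative loops are $Q_2,Q_3,Q_4$ together with \eqref{Eq:8}, again $3+4=7$. Pairwise non-isomorphism of the whole list then follows: groups are never isomorphic to nonassociative loops, the three abelian groups are distinct, the $Q_i$ with $i\ge 2$ are distinct by Theorem \ref{Th:Class2}, and \eqref{Eq:8} is singled out as the only non-nilpotent loop present. The one point demanding care---and the main, if modest, obstacle---is confirming that the nilpotent/non-nilpotent split makes the case analysis exhaustive, which rests entirely on the two imported facts just cited; everything else is bookkeeping.
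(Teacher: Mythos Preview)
Your argument is correct and follows essentially the same route as the paper: separate the abelian groups, use \cite{JKV3} to force nilpotency for odd $p$, invoke Lemma~\ref{Lm:InAp} to land in $\mathcal A_p$, apply Theorem~\ref{Th:Class2}, and handle the lone non-nilpotent loop of order $8$ via \cite[Section~3]{JKV2}. The only cosmetic difference is that you frame the $p=2$ exception as ``non-nilpotent'' whereas the paper says ``trivial center,'' but for order $8$ these are equivalent since a nontrivial center forces $Q/Z(Q)$ to be an abelian group of order at most $4$.
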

\begin{proof}
The three abelian groups of order $p^3$ are certainly automorphic.

By a result of \cite{JKV3}, every commutative automorphic loop of odd order $p^k$ is nilpotent. Hence every nonassociative automorphic loop of odd order $p^3$ is in $\mathcal A_p$, by Lemma \ref{Lm:InAp}. The loops of $\mathcal A_p$ are classified up to isomorphism in Theorem \ref{Th:Class2}, and they include the abelian group $Q_1$ ($\cong\mathbb Z_p\times\mathbb Z_{p^2}$).

When $p=2$ we can proceed similarly, except that we have to account for the unique commutative automorphic loop \eqref{Eq:8} of order $8$ with trivial center.
\end{proof}

Theorem \ref{Th:p3} proves \cite[Conjecture $5.12$]{JKV2} and answers \cite[Problem 5.13]{JKV2}.

\end{document}